\newtheorem{theorem}{Theorem}[section]
\newtheorem{corollary}[theorem]{Corollary}
\newtheorem{lemma}[theorem]{Lemma}
\newtheorem{proposition}[theorem]{Proposition}
\newtheorem{observation}[theorem]{Observation}
\newtheorem{fact}[theorem]{Fact}
\newtheorem{example}[theorem]{Example}
\newtheorem{question}[theorem]{Question}
\newtheorem{claim}{Claim}[theorem]
\newtheorem*{claim*}{Claim}
\newcommand{\ep}{\varepsilon}
\newcommand{\rr}[1]{\vec{r}({#1})}
\newcommand{\tr}[1]{\vec{tr}({#1})}
\newcommand{\trs}[1]{\vec{tr}^*({#1})}
\newcommand{\dg}[1]{\vec{\delta}({#1})}
\newcommand{\dgn}[1]{\vec{\delta}_n({#1})}
\newcommand{\dgs}[1]{\vec{\delta^*}({#1})}
\newcommand{\dgsn}[1]{\vec{\delta^*_n}({#1})}
\newcommand{\dgo}[1]{\vec{\delta^0}({#1})}
\newcommand{\tourn}[1]{\mathcal{T}_{#1}}
\newcommand{\ceiling}[1]{\left\lceil #1 \right\rceil}
\def\COMMENT#1{}
\let\COMMENT=\footnote
\title[Transitive tournament tilings in oriented graphs]{Transitive tournament tilings in oriented graphs with large minimum total degree} 
\author{Louis DeBiasio$^{1}$, Allan Lo$^{2}$, Theodore Molla$^{3}$, and Andrew Treglown$^{4}$}
\date{\today}
\begin{document}

\begin{abstract}
Let $\vec{T}_k$ be the transitive tournament on $k$ vertices.  We show that every oriented graph 
 on $n=4m$ vertices with minimum total degree $(11/12+o(1))n$ can be partitioned into vertex disjoint $\vec{T}_4$'s, and this bound is asymptotically tight.  We also improve the best known bound on the minimum total degree for partitioning oriented graphs into vertex disjoint $\vec{T}_k$'s. 
\end{abstract}

\maketitle
\noindent\footnotetext[1]{Department of Mathematics, Miami University {\tt debiasld@miamioh.edu}. Research supported in part by Simons Foundation Collaboration Grant \# 283194.}
\noindent\footnotetext[2]{School of Mathematics, University of Birmingham {\tt s.a.lo@bham.ac.uk}. Research supported in part by EPSRC, grant no. EP/P002420/1.} 
\noindent\footnotetext[3]{Department of Mathematics and Statistics, University of South Florida {\tt molla@usf.edu}. Research supported in part by NSF Grants DMS-1500121 and DMS-1800761.} 
\noindent\footnotetext[4]{University of Birmingham, United Kingdom {\tt a.c.treglown@bham.ac.uk}. Research supported  by EPSRC grant EP/M016641/1.}

\section{Introduction}
For a pair of (di)graphs  $G$ and $F$, we call a collection
of vertex disjoint copies of $F$ in $G$ an \textit{$F$-tiling}. We say
that an $F$-tiling is  \textit{perfect} if it consists of exactly $|V(G)|/|V(F)|$ copies
of $F$. Perfect $F$-tilings are sometimes referred to as \emph{perfect $F$-packings, perfect $F$-matchings} or \emph{$F$-factors}. 

The classic Hajnal--Szemer\'edi theorem~\cite{hajnal1970pcp} states that if $G$ is a graph on $n\in k \mathbb N$ vertices with minimum degree at least $(1-1/k)n$, then $G$ contains a perfect $K_k$-tiling. Moreover, there are $n$-vertex graphs with 
minimum degree  $(1-1/k)n-1$ that do not contain a  perfect $K_k$-tiling. 

  Recall that \emph{digraphs}
are graphs such that every pair of vertices has at most two edges between them, one oriented in each direction; \emph{oriented graphs} are orientations of simple graphs (so there is at most one directed edge between any pair of vertices). Note that oriented graphs are a subclass of digraphs.

Recently the study of tilings in digraphs has proven fruitful, and a number of papers have focused on developing analogs of the  Hajnal--Szemer\'edi theorem. In this setting there is more than one natural notion of degree: The \emph{minimum semidegree} $\delta ^0 (G)$ of a digraph $G$ is the minimum of its minimum outdegree $\delta ^+ (G)$ and its
minimum indegree $\delta ^- (G)$.  The \emph{minimum total degree} $\delta (G)$ of $G$ 
is  the minimum number of edges incident to a vertex in $G$. 
Thus,  
for oriented graphs $G$, $0\leq 2\delta ^0(G) \leq \delta (G) \leq n-1$.
 When there is no possibility of confusion, we often refer to the minimum total degree as the
minimum degree. 

Let $\vec{T}_k$ denote the transitive tournament on $k$ vertices and $C_3$ denote the cyclic triangle. In~\cite{MHSz} it was proven that every digraph on $n\in k \mathbb N$ vertices with minimum total degree at least $2(1-1/k)n-1$ contains  a perfect $\vec{T}_k$-tiling.
This degree condition is best possible, and the result implies the original Hajnal--Szemer\'edi theorem. A minimum semidegree version of the Hajnal--Szemer\'edi theorem was proven in~\cite{treglown2015directed} for large digraphs; this result considers
perfect $T$-tilings for any fixed tournament $T$. Finally, Czygrinow, DeBiasio, Molla and Treglown~\cite{CDMT} gave a general result which, together with a result of Wang~\cite{wang} determines the minimum total degree threshold for perfect $T$-tilings in a digraph for any tournament $T$.

For oriented graphs, the situation is much more difficult. Firstly notice that one can have arbitrarily large minimum total degree and still avoid even a single copy of an oriented graph. Indeed, a transitive tournament $G$ on $n$ vertices has 
$\delta (G)=n-1$ but contains no oriented graph with a directed cycle. Further, there are $n$-vertex tournaments (i.e. complete oriented graphs) with minimum semidegree at least $(n-4)/2$ (i.e. almost as large as possible)
 that do not contain a perfect $C_3$-tiling (see~\cite{keevs, theo}). Note though that Keevash and Sudakov~\cite{keevs} did prove that there exists a $c>0$ so that 
every sufficiently large oriented graph with minimum semidegree at least $(1/2-c)n$ contains a $C_3$-tiling covering all but at most $3$ vertices.
Additionally, 
Li and Molla~\cite{theo} recently proved that if $n$ is a sufficiently large odd multiple of $3$, every regular tournament on $n$ vertices has a perfect $C_3$-tiling, thereby verifying a conjecture of Cuckler~\cite{cuck} and Yuster~\cite{yustersurvey}.

More is known for the perfect $\vec{T}_k$-tiling problem in oriented graphs, though  understanding the general behaviour of the minimum degree threshold remains a significant challenge.
Yuster~\cite{yuster2003tiling} observed that if $G$ is an oriented graph on $n\in 3 \mathbb N$ vertices with minimum total degree at least $5n/6$, then $G$ has a perfect $\vec{T}_3$-tiling.   Furthermore, this bound is best possible. 
Balogh, Lo and Molla~\cite{balogh2017transitive} later proved an analogous result for the minimum semidegree threshold.

Yuster~\cite{yuster2003tiling} gave a bound on the total degree threshold for \emph{nearly perfect} tiling with $\vec{T}_k$.  That is if $G$ is an oriented graph on $n$ vertices with minimum total degree at least $\left(1 - 2^{-(k + \log k)}\right)n$, then $G$ has vertex disjoint copies of $\vec{T}_k$ covering all but $o(n)$ vertices.\footnote{Here and elsewhere $\log$ has base $2$.}
  Yuster also showed that if $G$ is an oriented graph on $n\in k \mathbb N$ vertices with minimum total degree at least $(1-4^{-k})n$, then $G$ has a perfect $\vec{T}_k$-tiling.  

Our main result is to asymptotically determine the minimum total degree threshold for perfect $\vec{T}_4$-tiling.  
  
\begin{theorem}\label{thm:T4tiling}
For all $\ep>0$, there exists $n_0$ such that if $G$ is an oriented graph on $n \geq n_0$ vertices, $n$ is divisible by $4$, and $\delta(G)\geq \left(\frac{11}{12}+\ep\right)n$, then $G$ has a perfect $\vec{T}_4$-tiling.  
Furthermore, for every $n$ divisible by $4$, there exists an oriented graph $G$ on $n$ vertices with
$\delta(G) = \ceiling{\frac{11n}{12}} - 1$ such that $G$ does not contain a perfect $\vec{T}_4$-tiling.
\end{theorem}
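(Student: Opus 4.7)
For the lower bound, I would construct an oriented graph on $n = 4m$ vertices with $\delta(G) = \lceil 11n/12 \rceil - 1$ and no perfect $\vec{T}_4$-tiling. A natural attempt partitions $V(G)$ into classes whose sizes and internal orientations force a divisibility obstruction: for instance, a small part $V_0$ of size just above $n/12$ together with suitably oriented parts $V_1, V_2, \ldots$ chosen so that every $\vec{T}_4$ in $G$ must meet $V_0$ in a fixed number of vertices, while $|V_0|$ is incompatible with the required multiplicity. Tuning the sizes so that $n$ remains divisible by $4$ and the minimum total degree is exactly $\lceil 11n/12 \rceil - 1$ yields the desired example.

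For the upper bound, the natural approach is the absorbing method in three steps. First, I would prove an \emph{almost-perfect tiling lemma}: any $G$ with $\delta(G) \ge (\tfrac{11}{12}+\ep)n$ has a $\vec{T}_4$-tiling covering all but $o(n)$ vertices. Yuster's almost-perfect tiling result requires $\delta \ge (1 - 2^{-6})n = 63n/64$, well above our hypothesis, so a genuine improvement is needed. I would apply Szemer\'edi's regularity lemma, pass to the reduced digraph, find a near-perfect fractional $\vec{T}_4$-tiling there using the inherited degree condition, and convert it back to an integer tiling of $G$ via a standard embedding/blow-up argument. Second, I would establish an \emph{absorbing lemma}: there is a set $A \subseteq V(G)$ with $|A| = O(\alpha n)$ and $4 \mid |A|$ such that for every $S \subseteq V(G) \setminus A$ with $|S| \le \alpha^2 n$ and $4 \mid |S|$, $G[A \cup S]$ admits a perfect $\vec{T}_4$-tiling. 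The standard route is to show that each vertex has many absorbers---small sets $X$ such that both $G[X]$ and $G[X \cup \{v\}]$ admit perfect $\vec{T}_4$-tilings after suitable padding---and then pick $A$ probabilistically. Third, apply the absorbing lemma to produce $A$, apply the almost-perfect tiling lemma to $G - A$ (adjusting by a bounded number of tiles so the leftover has size divisible by $4$), and absorb the leftover into $A$ to complete the perfect tiling.

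The hardest step will be the absorbing lemma at the tight threshold. Producing enough absorbers per vertex when $\delta(G)$ is only barely above $\tfrac{11}{12}n$ is delicate because one has to navigate around near-extremal configurations where the natural swap-gadgets may be scarce. I expect this will require a dichotomy: in the non-extremal case, where $G$ is ``far'' from the lower-bound construction, a direct counting argument yields abundant absorbers everywhere; in the near-extremal case, one must either exploit the approximate product structure of $G$ to build absorbers by hand, or bypass the absorbing framework entirely and construct a perfect tiling via a direct structural argument tailored to the extremal regime. Identifying the correct split between these two regimes, and controlling both simultaneously at the exact threshold $\tfrac{11}{12}+\ep$, is the core technical challenge of the proof.
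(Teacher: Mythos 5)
Your overall skeleton for the upper bound (almost-perfect tiling via regularity plus fractional tilings, then absorption) is the same as the paper's, but both halves of your proposal have concrete problems. For the lower bound, your construction does not work as described: if every copy of $\vec{T}_4$ must meet a part $V_0$ of size roughly $n/12$, then $G-V_0$ is $\vec{T}_4$-free on $11n/12$ vertices, so by the Tur\'an-type bound (an oriented graph with no $\vec{T}_4$ has underlying graph contained in a complete $(\rr{4}-1)$-partite graph) a vertex of $G-V_0$ has degree at most $\frac{6}{7}\cdot\frac{11n}{12}$ inside $G-V_0$, hence total degree at most $\frac{11n}{14}+\frac{n}{12}<\frac{11n}{12}$. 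The extremal example is a \emph{space} barrier, not a divisibility barrier: take the $7$-vertex tournament with no $\vec{T}_4$, blow up one vertex to a set $X$ of size just under $n/2$ with all edges present inside $X$, and blow up the other six vertices to independent sets of size about $n/12$. Every $\vec{T}_4$ must use at least two vertices of $X$ (a copy meeting each part at most once would yield a $\vec{T}_4$ in the base tournament), so $|X|<n/2$ forbids a perfect tiling, while the minimum degree is $n-1-\lceil (n-|X|)/6\rceil\approx \frac{11n}{12}$.

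For the upper bound, the two steps you leave open are exactly where the content of the theorem lies, and your anticipated strategy for the harder one points in the wrong direction. The almost-tiling step at $\frac{11}{12}$ rests on a specific finite statement: every tournament on $12$ vertices has a perfect \emph{fractional} $\vec{T}_4$-tiling (i.e.\ $\trs{4}=12$), proved by showing that for each vertex $v$ there is a $v$-extendable fractional tiling of size $3$ (a short case analysis on $d^\pm(v)$ using $\rr{4}=8$ and the fact that every tournament on at least $6$ vertices has a perfect $\vec{T}_3$-tiling, together with an LP-duality lemma); combined with Hajnal--Szemer\'edi this gives $\dgo{4}\le 1-\frac1{12}$. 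Without identifying this input, ``find a fractional tiling in the reduced graph'' is not a proof. For the absorbing step, no extremal/non-extremal dichotomy is needed (nor is one carried out in the paper): one shows directly that for any $x,y$ there are $\Omega(n^7)$ sets $L$ of size $7$ linking them, by proving the purely combinatorial lemma that whenever $T\subseteq N(x)\cap N(y)$ induces an $11$-vertex tournament (so that $\{x\}\cup T$ and $\{y\}\cup T$ are $12$-vertex tournaments), some $7$-subset $Z\subseteq T$ has the property that both $G[\{x\}\cup Z]$ and $G[\{y\}\cup Z]$ admit perfect $\vec{T}_4$-tilings; supersaturation then supplies $\Omega(n^{11})$ such tournaments $T$. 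The ``core technical challenge'' is this finite lemma about $12$-vertex tournaments, not a stability analysis, and your proposal contains no substitute for it.
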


Moreover, we improve the general bounds on the minimum total degree threshold for perfect $\vec{T}_k$-tiling, showing that a slight improvement on Yuster's above mentioned bound for \emph{nearly} perfect $\vec{T}_k$-tiling   in fact  ensures that $G$ has a perfect $\vec{T}_k$-tiling.  
Let $\rr{k}$ be the smallest integer $n$ such that every tournament on $n$ vertices contains a copy of $\vec{T}_k$.

\begin{theorem}\label{inone}
  For every $k \ge 4$ and $\ep > 0$, there exists $n_0$ such that when $n \ge n_0$ and $n$ is divisible by $k$
  the following holds.
  If $G$ is an oriented graph on $n$ vertices and 
  \begin{equation*}
    \delta(G) \ge \left(1 - \frac{1}{k (2 \rr{k-1} - k + 1)} + \ep\right)n  ,
  \end{equation*}
  then $G$ contains a perfect $\vec{T}_k$-tiling. In particular, $ \delta(G) \ge \left(1 - 2^{-(k + \log k)}+\ep \right)n$ suffices here.
\end{theorem}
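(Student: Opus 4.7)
The strategy is the absorbing method combined with Yuster's nearly-perfect tiling result. The plan is to (i) construct a small \emph{absorbing set} $A \subseteq V(G)$ with $|A| = o(n)$ and $|A|$ divisible by $k$, such that $G[A \cup S]$ has a perfect $\vec{T}_k$-tiling for every $S \subseteq V(G) \setminus A$ with $|S|$ sufficiently small and divisible by $k$; (ii) apply Yuster's near-tiling theorem to $G - A$; and (iii) complete the tiling using the absorbing property on the residual vertices.

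For step~(i), for each $k$-set $X \subseteq V(G)$ I call a $k$-set $R \subseteq V(G) \setminus X$ an \emph{$X$-absorber} if $G[R]$ induces a $\vec{T}_k$ and $G[R \cup X]$ splits into two vertex-disjoint copies of $\vec{T}_k$. The key counting claim is that every $k$-set $X$ has at least $cn^k$ absorbers for some $c = c(k,\varepsilon) > 0$. Granted this, a standard probabilistic argument (in the spirit of R\"odl, Ruci\'nski and Szemer\'edi) produces $A$ with the stated properties. For step~(ii), since $|A| = o(n)$, the minimum degree of $G - A$ still exceeds Yuster's near-tiling threshold $(1 - 2^{-(k + \log k)})|V(G - A)|$, so his theorem yields a $\vec{T}_k$-tiling covering all but $o(n)$ vertices of $G - A$. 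Let $S$ denote the uncovered set; by divisibility of $|V(G)|$ and $|A|$, $|S|$ is a multiple of $k$, and (iii) follows from the absorbing property of $A$. The final assertion in the theorem statement (that $\delta(G) \ge (1 - 2^{-(k + \log k)} + \varepsilon)n$ suffices) is immediate once the main bound is established, since $2\rr{k-1} - k + 1 < 2^{k-1}$.

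The main obstacle is the counting step in~(i). For a fixed $k$-set $X = \{x_1, \dots, x_k\}$, I would build $X$-absorbers $R$ greedily, selecting its vertices one at a time subject to orientation constraints that simultaneously guarantee $G[R] \cong \vec{T}_k$ and a partition of $G[R \cup X]$ into two $\vec{T}_k$'s. At each stage, the degree hypothesis bounds the union of the non-neighbourhoods of $X$, together with the bad neighbourhoods of already chosen vertices, by at most $n/(2\rr{k-1} - k + 1) + O(1)$. The remaining pool of at least $2\rr{k-1} - k + 1$ candidate vertices then contains a $\vec{T}_{k-1}$ by the defining property of $\rr{k-1}$, after discarding at most $k-1$ vertices to align the orientations as required. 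Such a $\vec{T}_{k-1}$ supplies the core of $R$, and the last vertex is placed by an analogous argument. The factor $k$ in the denominator $k(2\rr{k-1} - k + 1)$ of the threshold arises from bounding the sum of the $k$ individual non-neighbourhoods of the vertices of $X$, while the factor $2\rr{k-1} - k + 1$ encodes the size of pool needed to force a $\vec{T}_{k-1}$ of the appropriate orientation. The delicate part is the bookkeeping across all $k!$ orientations of edges between $X$ and $R$; this is what dictates the precise form of the degree threshold.
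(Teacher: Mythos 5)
There are two genuine gaps, the first of which is fatal to your absorber construction. You define an $X$-absorber for a $k$-set $X$ to be a $k$-set $R$ with $G[R]\cong \vec T_k$ such that $G[R\cup X]$ splits into two disjoint copies of $\vec T_k$, and you claim every $k$-set has $\Omega(n^k)$ such absorbers. In an oriented graph this is false: the leftover set $X$ is arbitrary and may contain three pairwise non-adjacent vertices (the non-adjacency graph can have $\Theta(n^2)$ edges under the stated degree condition, e.g.\ if it is a union of cliques, so such triples genuinely occur). Any copy of $\vec T_k$ is a tournament and hence contains at most one vertex from a set of pairwise non-adjacent vertices, so the two parts of the claimed partition of $R\cup X$ can cover at most two of the three, and no $X$-absorber exists. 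No amount of greedy bookkeeping over orientations can rescue this; the obstruction is that $X$ itself need not sit inside any tournament. This is precisely why the paper does not use $k$-set absorbers but instead invokes the Lo--Markstr\"om lemma (Lemma~\ref{linking}), whose hypothesis is purely \emph{pairwise}: for every two vertices $x,y$ one needs many $(ik-1)$-sets $L$ with both $G[L\cup\{x\}]$ and $G[L\cup\{y\}]$ perfectly tileable. That pairwise linking condition is verified in Lemma~\ref{trivial_absorbing} by finding tournaments on $4\rr{k-1}-3$ vertices inside $N(x)\cap N(y)$ and pigeonholing into the four quadrants $N^{\pm}(x)\cap N^{\pm}(y)$ to extract a suitably oriented $\vec T_{k-1}$; the lemma then manufactures an absorbing set that can handle arbitrary leftover sets of size divisible by $k$ via chains of single-vertex exchanges.

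The second gap is in step (ii): you propose to apply Yuster's near-tiling theorem to $G-A$, but Yuster's threshold is $1-\frac{1}{k(2\rr{k-1}-2)+2}$ (Theorem~\ref{yuster_bound}), and since $(k-1)(k-2)\ge 0$ one has $k(2\rr{k-1}-2)+2\ge k(2\rr{k-1}-k+1)$, i.e.\ Yuster's threshold is \emph{larger} than the degree bound hypothesised in Theorem~\ref{inone}. (Concretely for $k=4$: the theorem assumes $\delta(G)\ge(19/20+\ep)n$ while Yuster needs $\ge\frac{25}{26}n$.) So for small $\ep$ the hypothesis of Yuster's theorem is simply not met. The paper closes this by proving its own, stronger near-tiling bound $\dgo{k}\le 1-\frac{1}{k(2\rr{k-1}-k+1)}$, obtained from the fractional-tiling machinery: the bound $\trs{k}\le k(2\rr{k-1}-k+1)$ of Lemma~\ref{lem:trsk_upper} (via fractional covers and the extendability criterion of Corollary~\ref{cor:extend1}) combined with Theorem~\ref{mainobs} (blow-up plus Hajnal--Szemer\'edi for the fractional threshold, then the regularity lemma to convert a perfect fractional tiling into a near-perfect integral one). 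Your derivation of the ``in particular'' clause from $2\rr{k-1}\le 2^{k-1}$ is fine, but both the absorbing step and the near-tiling step need to be replaced as above.
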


Roughly, we obtain both of our results by splitting the problem into two parts: determining the minimum degree threshold for ``fractional $\vec{T}_k$-tiling'' 
(which is related to ``nearly perfect $\vec{T}_k$-tiling'')
and determining the minimum degree threshold for ``$\vec{T}_k$-absorbing''. When $k=4$, we are able to determine these two thresholds exactly, which is why we obtain an asymptotically tight bound in that case.

As discussed in the following section, one can obtain a bound for the minimum degree threshold for perfect $\vec{T}_k$-tilings via an application of the Hajnal--Szemer\'edi theorem. Indeed, this is where Yuster's aforementioned bounds came from.
However, the bound in Theorem~\ref{thm:T4tiling} is lower than that obtained via  the Hajnal--Szemer\'edi theorem, demonstrating the problem in the oriented graph setting is genuinely different. 
In order to discuss  more precisely where our bounds come from, we must first discuss their connection to some more  parameters in the next two sections.

In Section~\ref{sec:lin} we give a minimum degree condition that ensures an oriented graph has a perfect fractional $\vec{T}_k$-tiling (and thus a nearly perfect $\vec{T}_k$-tiling); see Theorem~\ref{mainobs}. 
This theorem will be applied in both the proof of Theorem~\ref{thm:T4tiling} and Theorem~\ref{inone}.
In Section~\ref{sec:abs} we introduce an absorbing result which, combined with our results from Section~\ref{sec:lin}, yields Theorem~\ref{inone}.
Theorem~\ref{thm:T4tiling} is then proved in Section~\ref{sec:T4}. We finish the paper with some concluding remarks and open questions.

\section{Oriented Ramsey numbers and perfect tilings}\label{sec:orn}

Recall $\rr{k}$ is the smallest integer $n$ such that every tournament on $n$ vertices contains a  copy of $\vec{T}_k$.  Erd\H{o}s and Moser \cite{EM} proved that $2^{(1/2+o(1))k}\leq \rr{k}\leq 2^{k-1}$.  The following 
result provides $\rr{k}$ for small values of $k$.  

\begin{theorem}[see \cite{sanchez1998tournaments}]
$\rr{3}=4$, $\rr{4}=8$, $\rr{5}=14$, and $\rr{6}=28$.
\end{theorem}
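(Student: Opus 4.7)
The plan is to establish matching upper and lower bounds on $\rr{k}$ for each $k \in \{3,4,5,6\}$. For the lower bounds, I would exhibit explicit $\vec{T}_k$-free tournaments; the obvious candidates are the Paley-type (quadratic residue) tournaments $QR_q$ on $\mathbb{F}_q$ for prime powers $q \equiv 3 \pmod 4$, defined by $x \to y$ iff $y - x$ is a nonzero square. These are vertex- and arc-transitive, which makes them very symmetric and good candidates to avoid large transitive subtournaments. Together with the cyclic triangle $C_3$ for the case $k=3$, I would verify that $QR_7$, $QR_{13}$, and $QR_{27}$ avoid $\vec{T}_4$, $\vec{T}_5$, and $\vec{T}_6$ respectively, yielding $\rr{3}>3$, $\rr{4}>7$, $\rr{5}>13$, $\rr{6}>27$. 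Checking the $QR_7$ and $QR_{13}$ cases by hand is routine; the $QR_{27}$ verification is the main nontrivial bookkeeping task among the lower bounds.

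For the upper bounds, the starting point is the standard Erd\H{o}s--Moser recursion $\rr{k} \le 2 \rr{k-1}$: in any tournament $T$ on $n$ vertices, a vertex $v$ has either $|N^+(v)| \ge \lceil (n-1)/2 \rceil$ or $|N^-(v)| \ge \lceil (n-1)/2 \rceil$, and if this size is at least $\rr{k-1}$, then one of these neighborhoods contains a $\vec{T}_{k-1}$ which, combined with $v$, yields $\vec{T}_k$. Starting from $\rr{2}=2$, this already gives $\rr{3} \le 4$ and $\rr{4} \le 8$, matching the lower bounds for those two cases.

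For $k \in \{5,6\}$ the Erd\H{o}s--Moser bound only gives $\rr{5} \le 16$ and $\rr{6} \le 32$, so finer analysis is required. The approach I would take is to suppose for contradiction that $T$ is a tournament on $14$ (resp.\ $28$) vertices with no $\vec{T}_k$. Then for every $v \in V(T)$, both $T[N^+(v)]$ and $T[N^-(v)]$ must be $\vec{T}_{k-1}$-free, so by the inductive value of $\rr{k-1}$ we get $|N^+(v)|, |N^-(v)| \le \rr{k-1} - 1$. Combined with $|N^+(v)|+|N^-(v)| = n - 1$, this already pins down these sizes quite tightly. One then does a double-counting argument on the number of labelled copies of $\vec{T}_{k-1}$ in $T$: on one hand, each vertex can be the ``source'' or ``sink'' of at most some controlled number of such copies (since its out-neighborhood, a tournament of bounded size, contains a bounded number of $\vec{T}_{k-2}$'s); on the other, each copy of $\vec{T}_{k-1}$ is counted a fixed number of times. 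Pushing these estimates, together with structural constraints forced by the extremal configurations (the neighborhoods being isomorphic to the Paley tournaments), should yield the required contradictions.

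The main obstacle, and the place where I would expect the bulk of the work to lie, is the $\rr{6} \le 28$ case: here the gap from the trivial induction is larger, the extremal tournament $QR_{27}$ is highly symmetric but not uniquely determined by simple degree data, and the case analysis branches considerably. I would expect to have to lean on additional invariants (e.g.\ counts of $3$-cycles, or score sequences of neighborhoods) and possibly computer-assisted enumeration, which is indeed the route taken in the reference \cite{sanchez1998tournaments}.
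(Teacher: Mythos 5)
First, a point of comparison: the paper does not prove this theorem at all --- it is quoted from Sanchez-Flores \cite{sanchez1998tournaments} (with $\rr{5}=14$ going back to Reid and Parker), so your proposal is being measured against the literature rather than against an argument in the paper. Your overall architecture (explicit extremal tournaments for the lower bounds, Erd\H{o}s--Moser recursion plus finer analysis for the upper bounds) is the correct one, and the cases $\rr{3}=4$ and $\rr{4}=8$ are handled exactly as you describe. But there are two concrete errors in the remaining cases.

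The first is in your lower bound for $\rr{5}$: the tournament $QR_{13}$ does not exist. You correctly state that the quadratic residue construction yields a tournament only when $q\equiv 3\pmod 4$ (so that exactly one of $\pm(y-x)$ is a square), but $13\equiv 1\pmod 4$; since $-1$ is a square mod $13$, the residues are closed under negation and the construction produces the (undirected) Paley graph, not a tournament. The actual unique $\vec{T}_5$-free tournament on $13$ vertices is a different circulant on $\mathbb{Z}_{13}$ whose connection set is \emph{not} the set of quadratic residues, and it must be exhibited and verified separately. (Your $QR_7$ and $QR_{27}$ are fine: $7\equiv 27\equiv 3\pmod 4$, and $QR_{27}$ is indeed the extremal $\vec{T}_6$-free example.) The second error is your location of the difficulty for $k=6$. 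Once $\rr{5}\le 14$ is known, the Erd\H{o}s--Moser recursion gives $\rr{6}\le 2\rr{5}=28$ \emph{immediately} --- there is no gap to close, and no double counting is needed. Indeed, your own setup shows this: in a $28$-vertex $\vec{T}_6$-free tournament every vertex would need $|N^+(v)|,|N^-(v)|\le \rr{5}-1=13$, yet these sum to $27>26$, an outright contradiction. The genuinely hard parts of the theorem are therefore the upper bound $\rr{5}\le 14$ (where the degree constraints only pin $\{|N^+(v)|,|N^-(v)|\}=\{6,7\}$ and a substantial structural case analysis follows) and the \emph{lower} bound $\rr{6}\ge 28$, i.e.\ verifying that $QR_{27}$ contains no $\vec{T}_6$ --- which is precisely the contribution of \cite{sanchez1998tournaments} and is where the heavy (computer-assisted) work lies, not in the upper bound for $k=6$ as your sketch asserts.
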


One can consider Tur\'an-type questions in oriented graphs.
The following observation shows that the Tur\'an number of $\vec{T}_k$ in an oriented graph
is completely determined by $\rr{k}$ and Tur\'an's theorem.
Here we let $t(n,r)$ be the number of edges in a Tur\'an graph on $n$ vertices with $r$ parts, i.e.,
$t(n,r)$ is the number of edges in a complete $r$-partite graph on $n$ vertices with parts of size either
the ceiling or floor of $n/r$.
\begin{observation}
  The maximum number of edges in an oriented graph on $n$ vertices that does not contain
  a copy of $\vec{T}_k$ is $t(n,\rr{k}-1)$.
\end{observation}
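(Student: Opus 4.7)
The plan is to prove the observation by combining Tur\'an's theorem (for the upper bound) with a suitable blow-up construction (for the lower bound). Since this is an ``observation'', I do not expect any serious obstacle; the content is essentially that the presence or absence of $\vec{T}_k$ in an oriented graph can be detected at the level of the underlying clique structure, once the Ramsey threshold $\rr{k}$ is known.

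For the upper bound, suppose $G$ is an oriented graph on $n$ vertices with $e(G) > t(n,\rr{k}-1)$. The underlying simple graph $\overline G$ has more than $t(n,\rr{k}-1)$ edges, so Tur\'an's theorem yields a clique $K_{\rr{k}}$ in $\overline G$. The corresponding $\rr{k}$ vertices in $G$ induce an oriented graph in which every pair is adjacent, i.e., a tournament on $\rr{k}$ vertices. By the very definition of $\rr{k}$, this sub-tournament contains a copy of $\vec{T}_k$, and hence so does $G$.

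For the lower bound I would exhibit a $\vec{T}_k$-free oriented graph on $n$ vertices with exactly $t(n,\rr{k}-1)$ edges. Let $T^*$ be a tournament on $\rr{k}-1$ vertices that contains no copy of $\vec{T}_k$; such a $T^*$ exists by the minimality in the definition of $\rr{k}$. Take the Tur\'an graph on $n$ vertices with $\rr{k}-1$ parts of sizes $\lceil n/(\rr{k}-1)\rceil$ and $\lfloor n/(\rr{k}-1)\rfloor$, and orient each cross-edge in accordance with the corresponding directed edge of $T^*$; this produces an oriented graph $G^*$ with exactly $t(n,\rr{k}-1)$ edges. Because no edges lie inside a part, any copy of $\vec{T}_k$ in $G^*$ uses at most one vertex from each part; collapsing each part to its representative vertex of $T^*$ would then embed $\vec{T}_k$ into $T^*$, contradicting the choice of $T^*$. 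Therefore $G^*$ is $\vec{T}_k$-free, which finishes the proof.

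The only place that requires any real thought is the blow-up argument in the lower bound, and even there the verification is immediate once one observes that a $\vec{T}_k$ in the blow-up must have all its vertices in distinct parts. Everything else is a direct invocation of Tur\'an's theorem and the definition of $\rr{k}$.
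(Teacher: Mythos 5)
Your proof is correct and follows exactly the paper's argument: Tur\'an's theorem forces a tournament on $\rr{k}$ vertices (hence a $\vec{T}_k$) once the edge count exceeds $t(n,\rr{k}-1)$, and an equitable blow-up of a $\vec{T}_k$-free tournament on $\rr{k}-1$ vertices gives the matching construction. Your extra verification that any $\vec{T}_k$ in the blow-up would project to one in $T^*$ is a detail the paper leaves implicit, but the approach is identical.
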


\begin{proof}
  If $G$ is an oriented graph on $n$ vertices with more than $t(n, \rr{k}-1)$ edges, then,
  by Tur\'an's theorem,
  $G$ must contain a tournament on $\rr{k}$ vertices, which implies that
  $G$ contains a copy of $\vec{T}_k$. 

  Let $T$ be a tournament on $\rr{k}-1$ vertices that does not
  contain a $\vec{T}_k$.  Blowing-up
  each vertex of $T$ equitably to form an oriented graph on $n$ vertices, produces 
  a graph without a copy of $\vec{T}_k$ whose underlying simple graph is the Tur\'an graph on $n$ vertices
  with $\rr{k}-1$ parts.
\end{proof}

For every positive integer $n$, let $\tourn{n}$ be the collection of tournaments with vertex set $[n]$.
Let $\tr{k}$ be the smallest integer $n$ such that every $T \in \tourn{n}$ has a perfect $\vec{T}_k$-tiling.  
Note that, by induction, for $n > \tr{k}$ and divisible by $k$, 
every tournament $T \in \tourn{n}$ has a perfect $\vec{T}_k$-tiling.
A folklore result, which can be verified with a straightforward case analysis, is  that $\tr{3}=6$ (see \cite{reid1989three}), and, 
with a computer search\footnote{
Using the  \texttt{nauty} and \texttt{Traces} software package \cite{nauty},
we determined that there are $43$ tournaments on $12$ vertices which do not have a 
perfect $\vec{T}_4$-tiling. These tournaments are listed in Appendix~\ref{app:12_no_tt4}. Later, Bernard Lidick\'y \cite{bern} was able to use
this list to determine that every tournament on $16$ vertices has a perfect
$\vec{T}_4$-tiling.}, it has been shown that $\tr{4} = 16$.
Caro \cite{caro1989decomposition} proved that 
\begin{equation*}
  \tr{k} \le \rr{2k-1} + (2k-1)\rr{k} <4^k, 
\end{equation*}
but the determination of $\tr{k}$ is open for every $k \ge 5$. (See \cite[Proposition~10]{treglown2012note}
for a concise proof of Caro's upper-bound.)

For $n \ge \tr{k}/k$, let $\dgn{k}$ be the minimum integer such that every oriented graph $G$ 
on $nk$ vertices with $\delta(G) \ge \dgn{k}$ has a perfect $\vec{T}_k$-tiling, 
and define $\dg{k} := \limsup_{n} \frac{\dgn{k}}{nk}$.
The following straightforward consequence of the Hajnal--Szemer\'edi theorem, together with any bounds on $\tr{k}$ gives a bound on $\dg{k}$.  
\begin{observation}[Yuster \cite{yuster2003tiling}, Treglown \cite{treglown2012note}]
Given any $k,n \in \mathbb N$, $\dgn{k}\leq (1-\frac{1}{\tr{k}})kn$ and so
$$\dg{k}\leq 1-\frac{1}{\tr{k}}< 1-\frac{1}{4^k}.$$
\end{observation}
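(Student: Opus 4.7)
The plan is to reduce the problem to the Hajnal--Szemer\'edi theorem applied to the underlying simple graph of $G$, and then exploit the defining property of $\tr{k}$ inside each resulting clique.

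Let $G$ be an oriented graph on $N = kn$ vertices with $\delta(G) \ge (1 - 1/\tr{k})N$, and let $H$ be its underlying simple graph. Because $G$ is oriented, every pair of vertices contributes at most one edge to $H$, so $\delta(H) = \delta(G) \ge (1 - 1/\tr{k})N$. Assuming first that $\tr{k} \mid N$, the Hajnal--Szemer\'edi theorem applied to $H$ with clique parameter $r = \tr{k}$ furnishes a perfect $K_{\tr{k}}$-tiling $\mathcal{K}$ of $H$. Each clique $K \in \mathcal{K}$ spans a tournament on $\tr{k}$ vertices in $G$, which by the very definition of $\tr{k}$ admits a perfect $\vec{T}_k$-tiling. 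Taking the union of these local tilings over $\mathcal{K}$ yields a perfect $\vec{T}_k$-tiling of $G$.

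For general $n$ in which $\tr{k} \nmid N$, a short preprocessing step suffices. Since $k \mid \tr{k}$, writing $\tr{k} = km$, it is enough to remove $n \bmod m$ vertex-disjoint copies of $\vec{T}_k$ from $G$ so that the remaining vertex count is divisible by $\tr{k}$, and then apply the argument above. These copies can be extracted greedily because the hypothesised minimum degree safely exceeds the Tur\'an threshold $(1 - 1/(\rr{k}-1))N$ for containing a single $\vec{T}_k$ given by the preceding Tur\'an-type observation (using $\tr{k} \ge \rr{k}$), and only a bounded (in $k$) number of such copies are removed, so for $n$ sufficiently large the minimum-degree hypothesis is preserved on the remainder.

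Dividing by $kn$ and passing to the limit gives $\dg{k} \le 1 - 1/\tr{k}$, and Caro's bound $\tr{k} \le \rr{2k-1} + (2k-1)\rr{k} < 4^k$ recorded above yields the strict inequality $1 - 1/\tr{k} < 1 - 1/4^k$. The only delicate point I anticipate is the small divisibility adjustment; once it is handled, the proof is a clean one-shot application of Hajnal--Szemer\'edi combined with the definition of $\tr{k}$.
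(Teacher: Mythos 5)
Your argument is the intended one: the paper states this observation without proof precisely because it is the ``straightforward consequence of the Hajnal--Szemer\'edi theorem'' described in the surrounding text, and your combination of a perfect $K_{\tr{k}}$-tiling of the underlying simple graph with the defining property of $\tr{k}$ on each clique is exactly that (compare the analogous blow-up argument in the proof of Theorem~\ref{mainobs}, which is the fractional counterpart).

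The one place your write-up wobbles is the justification of the divisibility adjustment. Removing $j < \tr{k}/k$ disjoint copies of $\vec{T}_k$ lowers the minimum degree by up to $jk$, while the threshold $(1-1/\tr{k})$ times the new order drops by only $(1-1/\tr{k})jk$; the resulting deficit $jk/\tr{k}$ is a positive constant that does not disappear as $n \to \infty$, so ``for $n$ sufficiently large'' is not the right reason the hypothesis is preserved. What actually saves the step is integrality: since $jk < \tr{k}$ the deficit is strictly less than $1$, and since degrees are integers we have $\delta(G) \ge N - \lfloor N/\tr{k} \rfloor$, where $\lfloor N/\tr{k} \rfloor = N'/\tr{k}$ for the adjusted order $N' = N - jk$ (as $\tr{k} \mid N'$ and $0 \le jk < \tr{k}$); hence $\delta(G') \ge \delta(G) - jk \ge N' - N'/\tr{k} = (1-1/\tr{k})N'$ exactly. (An $o(n)$ loss here would still yield the bound on $\dg{k}$, but not the stated bound on $\dgn{k}$ for every $n$.) The remaining ingredients --- greedy extraction of the $j$ copies via the Tur\'an-type observation together with $\tr{k} \ge \rr{k}$, and Caro's bound $\tr{k} < 4^k$ for the final inequality --- are fine.
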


Since $\dg{3}=5/6=1-1/6=1-1/\tr{3}$, it was conceivable that $\dg{k}= 1-1/\tr{k}$ for all $k$.  However, Theorem~\ref{thm:T4tiling} shows that $\dg{4}=11/12$, whereas $\tr{4} = 16$; which means that Theorem~\ref{thm:T4tiling} does not follow directly from the Hajnal--Szemer\'edi theorem.

\section{Linear programming and fractional tilings}\label{sec:lin}
\subsection{Linear programming}
Let $H$ be a  $k$-uniform hypergraph. A \emph{matching} in $H$ is a collection of
vertex disjoint edges in $H$.
A \emph{fractional matching} in $H$ is a function $w: E(H) \rightarrow [0,1]$ so that for each $v \in V(H)$, $\sum _{e \ni v} w(e) \leq 1$.
The \emph{size} of the fractional matching is $\sum _{e \in E(H)} w(e)$.
By definition, the largest fractional matching in $H$ has size at most $|H|/k$ (if it has size exactly $|H|/k$ we say it is \emph{perfect}).
Define $\nu(H)$ and $\nu^*(H)$ to be the size of the largest matching and fractional matching in $H$,
respectively.

A \emph{vertex cover} for $H$ is a set of vertices in $H$ that together contain
at least one vertex from each edge in $H$.
A \emph{fractional vertex cover} for $H$ is a function $w: V(H) \rightarrow [0,1]$ so that for each $e \in E(H)$, $\sum _{v \in e} w(v) \geq 1$.
The \emph{size} of the fractional vertex cover is $\sum _{v \in V(H)} w(v)$.
Let $\tau(H)$ and $\tau^*(H)$ be the size of the smallest vertex cover and fractional vertex cover of $H$,
respectively.
By the duality theorem of linear programming, we have
\begin{equation*}
  \nu(H) \le \nu^*(H) = \tau^*(H) \le \tau(H).
\end{equation*}

For a pair of graphs or directed graphs $G$ and $F$, 
we let $H_F(G)$ be the $|V(F)|$-uniform hypergraph on the vertex set $V(G)$
in which $U \in \binom{V(G)}{|V(F)|}$ is an edge if and only if $G[U]$ contains a copy of $F$.
If $G$ is a graph we define $H_k(G) := H_{K_k}(G)$ and 
if $G$ is a directed graph we set $H_k(G) := H_{\vec{T}_k}(G)$.
We set $\nu_F(G) := \nu(H_F(G))$ and $\nu_k(G) := \nu(H_k(G))$. 
We define $\nu^*_F(G)$, $\tau^*_F(G)$, $\nu_F(G)$, $\nu^*_k(G)$, $\tau^*_k(G)$, and $\nu_k(G)$ analogously.

A \textit{fractional $F$-tiling of $G$} is a weight function on the copies of $F$ in $G$ 
that corresponds to a fractional matching in $H_F(G)$, i.e., 
for every vertex $v \in V(G)$, the sum of the weights on the copies of $F$ that contain $v$
is at most one. It is a \textit{perfect fractional $F$-tiling of $G$} 
if the sum of the weights is equal to $|V(G)|/|V(F)|$.
We call a weight function on the vertices of $G$ a \textit{fractional $F$-cover} 
if the weight function is a vertex cover of $H_F(G)$, that is,
if the sum of the weights on the vertices of every copy of $F$ in $G$ is at least one.
For both a fractional $F$-tiling of $G$ and a fractional $F$-cover of $G$,
the \textit{size} of the weight function is defined to be the sum of the weights
(i.e. analogous to the notion of the size of a fractional matching and a fractional vertex cover).

\smallskip

Let $\trs{k}$ denote the smallest integer $n$ such that for every $T \in \tourn{n}$
we have $\nu^*_k(T) = n/k$.
We clearly have that $\trs{k} \le \tr{k}$.
Also, every tournament $T$ on $n \ge \trs{k}$ vertices satisfies $\nu^*_k(T) = n/k$.  
Indeed, by induction on $n$, we may assume that $n > \trs{k}$ and, for each vertex $v \in V(T)$, there is a perfect fractional $\vec{T}_k$-tiling $w_v$ in $T\setminus \{v\}$.
Then $w := \frac1{n-1} \sum_{v \in V(T)} w_v$ is a perfect fractional $\vec{T}_k$-tiling in~$T$.

\subsection{Forcing fractional tilings and bounds on $\trs{k}$}

For every $n \ge \trs{k}$, 
define $\dgsn{k}$ to be the smallest integer such that every oriented graph on $n$ vertices with 
$\delta(G) \ge \dgsn{k}$ has a perfect fractional $\vec{T}_k$-tiling, and
let $\dgs{k} := \limsup_n \dgsn{k}/n$.
Let $\dgo{k}$ be the infimum of the set of numbers $\delta \in [0,1]$ such that for every $\gamma > 0$ there exists $n_0$
such that every oriented graph $G$ on $n \ge n_0$ vertices with $\delta(G)>\delta n$ has a $\vec{T}_k$-tiling of $G$ missing at most $\gamma n$ vertices.

Using our notation, we now rewrite (a slightly weaker\footnote{There are three differences to note.  
  First, we ignore the case $k=2$ and $k=3$ which Yuster considers.
  Second, Yuster proves that one can almost tile an oriented graph that meets the minimum degree condition with the blow-up of $\vec{T}_k$, but with the regularity lemma, this version of the theorem implies the original version.
  Third, Yuster writes the minimum degree condition in terms of the function $f^*(k)$ which is defined to be the smallest integer $m$ such that every tournament on at least $m$ vertices has the property that every vertex is contained in a copy of $\vec{T}_k$, but it is not hard to see that $f^*(k) = 2\rr{k-1}$ (see Example~\ref{ex:trsk_lower}).}
  version of)
Yuster's result (\cite[Theorem 3.1]{yuster2003tiling}). 

\begin{theorem}[Yuster \cite{yuster2003tiling}]\label{yuster_bound}
  For $k \ge 4$, $\dgo{k} \le 1 - \frac{1}{k (2 \rr{k-1} - 2) + 2} \le 1 - 2^{-(k + \log k)}$.
\end{theorem}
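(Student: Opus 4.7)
The plan is to derive this theorem directly from Yuster's Theorem~3.1 in~\cite{yuster2003tiling}, which (as noted in the footnote) states the almost-perfect $\vec{T}_k$-tiling bound $\dgo{k} \le 1 - \frac{1}{k(f^*(k)-2)+2}$, where $f^*(k)$ is the least $m$ such that every tournament on $m$ vertices has every vertex lying in a copy of $\vec{T}_k$. So once I translate between notations, the work reduces to two purely combinatorial verifications: (i) $f^*(k) = 2\rr{k-1}$, and (ii) $1 - \frac{1}{k(2\rr{k-1}-2)+2} \le 1 - 2^{-(k+\log k)}$.

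For (i), the upper bound $f^*(k) \le 2\rr{k-1}$ is a pigeonhole argument: in any tournament $T$ on $2\rr{k-1}$ vertices, every $v \in V(T)$ satisfies $|N^+(v)| + |N^-(v)| = 2\rr{k-1}-1$, so at least one of its semi-neighborhoods has size $\ge \rr{k-1}$ and hence contains a copy of $\vec{T}_{k-1}$, which extends by $v$ to a $\vec{T}_k$ through $v$. For the matching lower bound $f^*(k) \ge 2\rr{k-1}$, I would build an extremal tournament $T^*$ on $2\rr{k-1}-1$ vertices as follows: take disjoint sets $A$ and $B$ of size $\rr{k-1}-1$ with $T^*[A]$ and $T^*[B]$ both $\vec{T}_{k-1}$-free (which exist by the definition of $\rr{k-1}$), add a central vertex $v$ with $N^+(v) = A$ and $N^-(v) = B$, and orient every edge between $A$ and $B$ from $A$ to $B$. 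Any transitive tournament $u_1 \to u_2 \to \cdots \to u_k$ containing $v = u_p$ must have $\{u_1,\dots,u_{p-1}\} \subseteq B$ and $\{u_{p+1},\dots,u_k\} \subseteq A$; transitivity demands $u_i \to u_j$ whenever $i < p < j$, contradicting the $A \to B$ orientation unless $p = 1$ or $p = k$. Either case would yield a $\vec{T}_{k-1}$ entirely inside $A$ or $B$, which is impossible by construction.

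Item (ii) is a short calculation: the Erd\H os--Moser bound $\rr{k-1} \le 2^{k-2}$ gives $k(2\rr{k-1}-2)+2 \le k \cdot 2^{k-1} = 2^{k-1+\log k} \le 2^{k+\log k}$, so $\frac{1}{k(2\rr{k-1}-2)+2} \ge 2^{-(k+\log k)}$, which yields the stated inequality. There is no real obstacle in this proof; the only step requiring some care is the choice of orientation between $A$ and $B$ in the construction of $T^*$, which forces $v$ to be either a source or a sink of any transitive tournament containing it and thereby rules out all copies of $\vec{T}_k$ through $v$.
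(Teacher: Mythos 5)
Your proposal is correct and takes essentially the same route as the paper: the theorem is obtained by citing Yuster's Theorem~3.1 and translating his parameter $f^*(k)$ via the identity $f^*(k)=2\rr{k-1}$, which the paper relegates to a footnote pointing at Example~\ref{ex:trsk_lower}; your extremal tournament on $2\rr{k-1}-1$ vertices is exactly that example, and your pigeonhole upper bound and the arithmetic with the Erd\H{o}s--Moser bound $\rr{k-1}\le 2^{k-2}$ fill in the details the paper leaves implicit. (Note that only $f^*(k)\le 2\rr{k-1}$ is actually needed for the stated inequality, though proving equality as you do is harmless and matches the paper's claim.)
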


Later in this section  we prove the following bounds on $\dgo{k}$ and $\dgs{k}$ in terms of $\trs{k}$. 

\begin{theorem}\label{mainobs}
  $1 - \frac{1}{\trs{k} - 1} < \dgo{k} \le \dgs{k} \leq 1-\frac{1}{\trs{k}}$.
\end{theorem}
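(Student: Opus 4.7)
The plan is to prove the three inequalities separately. For the middle inequality $\dgo{k} \le \dgs{k}$, suppose $\delta(G) > (\dgs{k} + \gamma)n$ for some $\gamma > 0$ and $n$ large; then by the definition of $\dgs{k}$, $G$ has a perfect fractional $\vec{T}_k$-tiling. A standard ``fractional to integral'' step—applying a nibble to $H_k(G)$ (whose codegrees are $O(n^{k-2}) = o(n^{k-1})$ under the minimum-degree assumption), or equivalently using the regularity lemma for oriented graphs plus a rounding/absorbing argument—yields a $\vec{T}_k$-tiling missing only $o(n)$ vertices. Letting $\gamma \to 0$ gives $\dgo{k} \le \dgs{k}$.

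For the upper bound $\dgs{k} \le 1 - 1/\trs{k}$, set $r^* := \trs{k}$ and assume $\delta(G) \ge (1 - 1/r^* + \ep)n$. I would build a perfect fractional $\vec{T}_k$-tiling in two layers. First, the underlying simple graph $G^{\mathrm{und}}$ has $\delta(G^{\mathrm{und}}) \ge (1 - 1/r^*)n$, so applying Hajnal--Szemer\'edi to the $r^*$-fold blow-up of $G^{\mathrm{und}}$ (whose order is divisible by $r^*$) and projecting the resulting $K_{r^*}$-factor back yields a \emph{balanced} perfect fractional $K_{r^*}$-tiling $\beta$ of $G^{\mathrm{und}}$ with $\sum_C \beta(C) = n/r^*$ and $\sum_{C \ni v} \beta(C) = 1$ for every $v$. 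Second, each $K_{r^*}$-clique $C$ is an $r^*$-vertex tournament in $G$, so by the definition of $\trs{k}$ it carries a perfect fractional $\vec{T}_k$-tiling $w_C$ of size $r^*/k$; necessarily $w_C$ is balanced (all vertex weights equal $1$). Setting $w(K) := \sum_C \beta(C)\, w_C(K)$ for every copy $K$ of $\vec{T}_k$ in $G$ produces a perfect fractional $\vec{T}_k$-tiling of $G$: balancedness gives $\sum_{K \ni v} w(K) = \sum_{C \ni v} \beta(C) = 1$, and $\sum_K w(K) = (r^*/k)\sum_C \beta(C) = n/k$. Hence $\nu^*_k(G) = n/k$.

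For the strict lower bound $1 - 1/(\trs{k}-1) < \dgo{k}$, let $r := \trs{k}-1$ and, by the definition of $\trs{k}$, fix a tournament $T$ on $r$ vertices with $\nu^*_k(T) = r/k - \eta$ for some constant $\eta > 0$. Let $G_0$ be the equitable blow-up of $T$ with classes of size $m$, so $n_0 := |V(G_0)| = rm$ and $\delta(G_0) = (1-1/r)n_0$; projecting any fractional $\vec{T}_k$-tiling of $G_0$ to $T$ by summing weights over fibers yields $\nu^*_k(G_0) \le m\cdot\nu^*_k(T) = n_0/k - m\eta$. To promote the blow-up ratio $1-1/r$ to a strict inequality at the level of $\dgo{k}$, adjoin a set $U$ of $s = \lambda n_0$ ``universal'' vertices for a small constant $\lambda > 0$, each joined to every vertex of $G_0$ (with some orientation) and tournament-oriented among themselves; put $G := G_0 \cup U$ and $n := n_0 + s$. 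Then $\delta(G) \ge (1 - 1/r + \lambda)n_0$, so $\delta(G)/n \ge (1 - 1/r) + \lambda/(r(1+\lambda))$, exceeding $1 - 1/r$ by a positive constant. Moreover, any $\vec{T}_k$-tiling of $G$ contains at most $\nu^*_k(G_0) \le n_0/k - m\eta$ tiles lying in $G_0$ and at most $|U|=s$ tiles meeting $U$, hence covers at most $k(n_0/k - m\eta + s) = n - (km\eta - (k-1)s)$ vertices. Choosing $\lambda$ so that $(k-1)\lambda < k\eta/r$ makes $km\eta - (k-1)s$ a positive linear function of $n$, producing a constant $\gamma_0 > 0$ such that $G$ admits no $\gamma_0 n$-near-perfect tiling. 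Hence $\dgo{k} > 1 - 1/r$.

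The main technical subtlety is the combination step in the upper bound: both $\beta$ and each $w_C$ must be balanced in order to keep $\sum_{K \ni v} w(K) \le 1$, and this balancedness is precisely why $\trs{k}$ (rather than $\tr{k}$) is the relevant parameter—only a \emph{fractional} per-clique tiling is needed. The middle inequality's rounding step is a standard application of known hypergraph-matching tools, and the lower bound's strict-inequality perturbation by a linear-size universal set $U$ comes essentially ``for free'' once the blow-up's tiling deficit is shown to scale linearly in $n$.
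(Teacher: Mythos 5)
Your proposal is correct. The upper bound $\dgs{k}\le 1-\frac{1}{\trs{k}}$ and the reduction $\dgo{k}\le\dgs{k}$ follow essentially the paper's route: blow-up plus Hajnal--Szemer\'edi plus per-clique perfect fractional $\vec{T}_k$-tilings (which, as you observe, are automatically balanced since the $\trs{k}$ vertex constraints must all be tight), and a standard regularity/nibble conversion for the middle inequality, which the paper likewise only sketches. The strict lower bound is where you genuinely diverge. The paper takes the bare $s$-fold blow-up $G_0$ of an extremal tournament $T$ on $\trs{k}-1$ vertices, whose minimum degree is exactly $\left(1-\frac{1}{\trs{k}-1}\right)n_0$, assumes for contradiction that $\dgo{k}\le 1-\frac{1}{\trs{k}-1}$, extracts a near-perfect integer tiling of $G_0$, and projects it to a fractional tiling of $T$ of size exceeding $\nu^*_k(T)$. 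You instead augment $G_0$ by a set $U$ of $\lambda n_0$ dominating vertices, pushing the minimum degree up to $\left(1-\frac{1}{(1+\lambda)(\trs{k}-1)}\right)n$, a constant margin above the blow-up ratio, and then count directly: at most $\nu^*_k(G_0)\le m\,\nu^*_k(T)=n_0/k-m\eta$ tiles avoid $U$ and at most $|U|$ tiles meet it, so for $(k-1)\lambda<k\eta/(\trs{k}-1)$ every tiling misses $\Omega(n)$ vertices. Your arithmetic checks out, and the perturbation buys something: it produces, for every $\ep'>0$, explicit graphs with $\delta(G)>\left(1-\frac{1}{\trs{k}-1}+c_\lambda-\ep'\right)n$ for a constant $c_\lambda>0$ and no near-perfect tiling, so the strict inequality $\dgo{k}>1-\frac{1}{\trs{k}-1}$ follows without any fuss about whether the infimum defining $\dgo{k}$ is attained, a point the paper's contradiction argument treats somewhat loosely. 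The only soft spot is the fractional-to-integral step in the middle inequality: $H_k(G)$ need not be near-regular, so the nibble has to be applied in its weighted form guided by the perfect fractional matching (or one falls back on the regularity lemma, as the paper does); this is standard, and the paper is no more detailed there than you are.
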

We also obtain the following  bounds on $\trs{k}$.

\begin{theorem}\label{boundy}
For all $k\geq 3$, $$\max\left\{ 2 \rr{k-1}, \frac{k}{k-2}\left(\rr{k}-2\right)\right\}\leq \trs{k}\leq k (2 \rr{k-1} - k + 1).$$
\end{theorem}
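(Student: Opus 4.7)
The plan is to split the statement into three bounds and address each separately. First, for the lower bound $\trs{k}\ge 2\rr{k-1}$, I construct an $n=(2\rr{k-1}-1)$-vertex tournament $T$ containing a vertex $v$ that lies in no copy of $\vec{T}_k$; any fractional $\vec{T}_k$-tiling then gives $v$ coverage zero, forcing $\nu^*_k(T)\le(n-1)/k<n/k$. Take two vertex-disjoint $\vec{T}_{k-1}$-free tournaments $T_-,T_+$ on $\rr{k-1}-1$ vertices each (which exist by the definition of $\rr{k-1}$) and orient $T_-\to v\to T_+$ together with $T_+\to T_-$. A $\vec{T}_k$ through $v$ at position $i$ would force $i-1$ earlier vertices into $T_-$ and $k-i$ later ones into $T_+$: the extremes $i\in\{1,k\}$ then demand $T_+$ or $T_-$ to contain a $\vec{T}_{k-1}$, while $1<i<k$ demands an edge $T_-\to T_+$; each contradicts the construction.

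For the second lower bound $\trs{k}\ge k(\rr{k}-2)/(k-2)$, the plan is to build $T=A\cup B$ with $|A|=\rr{k}-2$ and $A$ being $\vec{T}_k$-free, together with a small auxiliary set $B$ oriented so that every copy of $\vec{T}_k$ in $T$ contains at least two vertices of $B$. The fractional cover assigning $0$ on $A$ and $1/2$ on $B$ then has total weight $|B|/2$, and rearranging $|B|/2<(|A|+|B|)/k$ yields $|B|<2|A|/(k-2)$, so that $n=|A|+|B|<k|A|/(k-2)=k(\rr{k}-2)/(k-2)$. The subtle point is engineering the orientations so that no $\vec{T}_{k-1}$ of $A$ admits a monotonic extension by a single $b\in B$, and $B$ itself contains no $\vec{T}_{k-1}$; this can be arranged by attaching $B$-vertices as sinks or sources to $A$ in a controlled pattern, generalising the $k=3$ case in which appending a single sink to a $4$-vertex non-transitive tournament already produces a $5$-vertex witness.

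For the upper bound $\trs{k}\le k(2\rr{k-1}-k+1)$, the averaging argument from the excerpt reduces the task to showing that every tournament $T$ on exactly $m:=k(2\rr{k-1}-k+1)$ vertices has $\nu^*_k(T)=m/k$; equivalently, via LP duality, every fractional cover $w$ of $T$ satisfies $\sum_v w(v)\ge m/k$. I partition $V(T)=L\cup H$ with $L:=\{v:w(v)<1/k\}$. Since $T[L]$ is $\vec{T}_k$-free, $|L|\le\rr{k}-1$, and consequently $|H|\ge m-\rr{k}+1\ge 2\rr{k-1}-1$ using $\rr{k}\le 2\rr{k-1}$. Yuster's observation that $f^*(k)=2\rr{k-1}$, applied to $T[H\cup\{v\}]$, implies that each $v\in L$ lies in a $\vec{T}_k$ whose other $k-1$ vertices are in $H$. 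Writing $\epsilon_v:=1/k-w(v)>0$ for $v\in L$ and $\eta_h:=w(h)-1/k\ge 0$ for $h\in H$, the cover inequality on any such $\vec{T}_k$ $F$ reads $\sum_{h\in F\setminus\{v\}}\eta_h\ge\epsilon_v$, while the target inequality $\sum_v w(v)\ge m/k$ is equivalent to $\sum_h\eta_h\ge\sum_v\epsilon_v$. A transportation LP argument then distributes the deficits across the excesses to yield the global inequality. The main obstacle is verifying the associated Hall-type condition: for every $S\subseteq L$, the combined excess capacity of $H$-vertices appearing in the relevant $\vec{T}_k$-extensions of $S$ must dominate $\sum_{v\in S}\epsilon_v$; this expansion property hinges on $|H|$ being large enough to provide a rich supply of $\vec{T}_{k-1}$'s in $T[H]$ available to extend any single $v\in L$.
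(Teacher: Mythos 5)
Your first lower bound ($\trs{k}\ge 2\rr{k-1}$) is exactly the paper's construction and your verification that the middle vertex $v$ lies in no $\vec{T}_k$ is correct. The other two parts have genuine gaps. For the lower bound $\trs{k}\ge \frac{k}{k-2}(\rr{k}-2)$, the cover-counting arithmetic is right, but the construction you propose does not exist as described: attaching a vertex $b$ to $A$ as a global sink (or source) immediately creates a $\vec{T}_k$ with only one vertex in $B$, because $A$ is a tournament on $\rr{k}-2\ge \rr{k-1}$ vertices and therefore contains a $\vec{T}_{k-1}$, which together with the sink $b$ is a monotone extension --- precisely the configuration you say you must avoid. (Your $k=3$ prototype has the same defect: with $|B|=1$ the weight-$\tfrac12$ cover cannot cover any $\vec{T}_3$ at all.) The correct construction, which the paper uses, is to take a $\vec{T}_k$-free tournament on $\rr{k}-1$ vertices and blow up a \emph{single} vertex into $B$, so that every $b\in B$ has the same in/out pattern towards $A$ as the deleted vertex; then a $\vec{T}_k$ with at most one vertex in $B$ would project to a $\vec{T}_k$ in the original $\vec{T}_k$-free tournament. ``Sinks or sources in a controlled pattern'' does not capture this, and no choice of global sinks/sources works.

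For the upper bound, your reduction via LP duality and the observation that $T[L]$ is $\vec{T}_k$-free (hence $|L|\le \rr{k}-1$) are fine, as is the use of $f^*(k)=2\rr{k-1}$ to extend each $v\in L$ into $H$. The gap is the ``transportation LP argument'': feasibility of distributing the deficits $\epsilon_v$ into the excesses $\eta_h$ is equivalent to the Hall-type condition $\sum_{v\in S}\epsilon_v\le\sum_{h\in N(S)}\eta_h$ for all $S\subseteq L$, and for $S=L$ this condition \emph{is} the inequality $\sum_h\eta_h\ge\sum_v\epsilon_v$ you are trying to prove. So the step you flag as ``the main obstacle'' is not a technical verification but a circularity; a single cover inequality per vertex of $L$ cannot control the reuse of $H$-vertices across different extensions. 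The paper avoids this entirely by working with fractional matchings in the link hypergraph of each vertex $v$: it proves that every tournament $S$ on $s\ge\rr{k-1}$ vertices satisfies $\nu^*_{k-1}(S)\ge\frac{s-(\rr{k-1}-k+1)}{k-1}$ (by ordering the vertices of a minimum fractional cover and noting the first $\rr{k-1}$ of them span a $\vec{T}_{k-1}$), applies this to $N^+(v)$ and $N^-(v)$ to get a $v$-extendable fractional tiling of size $n/k$, and then invokes complementary slackness once (Lemma~\ref{extendable}). To complete your route you would need a quantitative lemma of this type anyway, at which point you have reproduced the paper's argument.
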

Note that the upper bound in Theorem~\ref{boundy} together with Theorem~\ref{mainobs} yields a slight strengthening of Theorem~\ref{yuster_bound}; 
they also can be combined with an absorbing result (Lemma~\ref{trivial_absorbing})  to give Theorem~\ref{inone} (see Section~\ref{sec:inone}).
Theorems~\ref{mainobs} and~\ref{boundy} will also be applied in the proof of Theorem~\ref{thm:T4tiling}.

We now prove Theorem~\ref{mainobs}.

\begin{proof}[Proof of Theorem~\ref{mainobs}]  
Let $G$ be an oriented graph on $n$ vertices with $\delta(G) \ge \left(1 - \frac{1}{\trs{k}} \right) n$.
Blow up each vertex of $G$ to a set of size~$\trs{k}$ and call the resulting oriented
graph~$G'$.
By the Hajnal--Szemer\'edi theorem, the simple graph underlying~$G'$ has a perfect $K_{\trs{k}}$-tiling. 
Note that each $K_{\trs{k}}$ has a perfect fractional $\vec{T}_k$-tiling in~$G'$. 
Hence $G'$ has a perfect fractional $\vec{T}_k$-tiling and so does~$G$.
So we have established that $\dgs{k} \leq 1 - \frac{1}{\trs{k}}$.

  Assume $\dgo{k} \le 1 - \frac{1}{\trs{k} - 1}$.
  Let $T$ be a tournament on $\trs{k} - 1$ vertices that does not have a perfect fractional $\vec{T}_k$-tiling; i.e. $\nu^*_k(T)<|T|/k$.
	Let $\gamma := \frac{|T|/k - \nu^*_k(T)}{|T|/k}$ and note that $\gamma>0$.
  For $s$ sufficiently large, blow up each of the vertices of $T$ into a set of $s$ vertices to form an oriented graph $G$
  on $n = s \cdot (\trs{k} - 1)$ vertices.
  Since $\delta(G)/n = 1 - s/n =  1 - 1/(\trs{k} - 1) \ge \dgo{k}$, and $n$ is sufficiently large, we can 
  assume that there exists a $\vec{T}_k$-tiling $\mathcal{T}$ of $G$ that covers all but at most $0.9 \gamma n$ vertices.
  Because every $\vec{T}_k$ in $G$ corresponds to a $\vec{T}_k$ in $T$, we can create a 
  fractional $\vec{T}_k$-tiling of $T$ by giving each $\vec{T}_k$ in $T$ weight equal to the number 
  of times a $\vec{T}_k$ that corresponds to it appears in $\mathcal{T}$ divided by $s$.
  This fractional $\vec{T}_k$-tiling of $T$ has size 
  \begin{equation*}
    \frac{|\mathcal{T}|}{s} \ge \frac{\left(1 - 0.9 \gamma\right)n}{ks} = \left(1 - 0.9 \gamma \right)\frac{|T|}{k} 
    > \left(1 - \gamma \right)\frac{|T|}{k} = \nu^*_k(T), 
  \end{equation*}
  a contradiction.
  So, we have established that $\dgo{k} > 1 - \frac{1}{\trs{k} - 1}$. 

  To complete the proof, we need to show that $\dgo{k} \le \dgs{k}$.  
  This can be shown by following a standard application of Szemer\'edi's regularity lemma.\footnote{It is also possible to establish this fact without appealing to the regularity lemma, 
  e.g., see \cite{alon2012large}.}
 Since the argument is standard we only sketch the proof. It suffices to show that
given any $\delta >\dgs{k}$ and any $\gamma > 0$, there exists $n_0$
such that every oriented graph $G$ on $n \ge n_0$ vertices with $\delta(G)>\delta n$ has a $\vec{T}_k$-tiling  missing at most $\gamma n$ vertices.

Let $G$ be such an oriented graph. Applying the regularity lemma one can obtain an oriented spanning subgraph $R'$ of the so-called reduced digraph $R$ of $G$ where 
$\delta(R')> \dgs{k}|R'|$. Thus, (as $R'$ is sufficiently large) $R'$ contains a perfect fractional $\vec{T}_k$-tiling. Using this fractional tiling as a framework, the counting lemma associated with the regularity lemma now ensures
$G$ contains a $\vec{T}_k$-tiling missing at most $\gamma n$ vertices.
\end{proof}


\subsection{Proof of Theorem~\ref{boundy}}

The following example gives a lower bound on $\dgo{k}$, which together with Theorem~\ref{mainobs} gives a lower bound on~$\trs{k}$.

\begin{example}\label{ex:dgok_lower}
  Let $k \ge 3$.
  For every $n\geq \rr{k}$ and $0 < \gamma < 1$, there exists an oriented graph $G$ on $n$ vertices with
		\begin{equation*}
    \delta(G) \ge \left \lfloor \left(1 - \frac{k-2}{k \left(\rr{k}-2\right)}\right)n - {\frac{2 \gamma n + k}{k (\rr{k} - 2)}} \right \rfloor,
  \end{equation*}
  such that no $\vec{T}_k$-tiling covers more than $(1 - \gamma)n$ vertices of $G$.
  In particular, this implies that $\dgo{k} \ge 1 - \frac{k-2}{k \left(\rr{k}-2\right)}$ which implies $\trs{k} \ge  \frac{k}{k-2}\left(\rr{k}-2\right)$ by Theorem~$\ref{mainobs}$.
\end{example}
\begin{proof}
  Take the largest tournament which does not contain $\vec{T}_k$; note that it has exactly $\rr{k}-1$ vertices.  
  For $\gamma > 0$, blow up one of the vertices to a set $X$ of size $\lfloor(1 - \gamma)2n/k \rfloor$ and inside the set add all possible edges (oriented arbitrarily).  Blow-up the other $\rr{k}-2$ parts to independent sets of size either the floor or ceiling of 
  \begin{equation*}
    \left(n - |X|\right) \cdot \frac{1}{\rr{k} - 2} \le 
    \frac{(k-2)n}{k \left(\rr{k}-2\right)} + \frac{2\gamma n + k}{k (\rr{k} - 2)},
  \end{equation*}
	whilst ensuring the resulting oriented graph $G$ has $n$ vertices.
  Note that every $\vec{T}_k$ must use at least $2$ vertices from $X$, so there is only space for at 
  most $(1 - \gamma)n/k$ vertex disjoint copies of $\vec{T}_k$ in $G$.  
\end{proof}

The next example gives a different lower bound on $\trs{k}$, which together with Example~\ref{ex:dgok_lower} implies the lower bound in Theorem~\ref{boundy}.

\begin{example}\label{ex:trsk_lower}
  For every $k \ge 3$, $\trs{k} \ge 2 \rr{k-1}$.
\end{example}

\begin{proof}
  To see that $\trs{k} \ge 2 \rr{k-1}$, 
  consider a tournament $T$ on $n = 2 \rr{k-1} - 1$ vertices in which there exists a vertex $u \in V(T)$
  such that $|N^+(u)| = |N^-(u)| = (n-1)/2 = \rr{k-1} - 1$; both $N^+(u)$ and $N^-(u)$ induce a tournament on
  $\rr{k-1} - 1$ vertices that does not contain a $\vec{T}_{k-1}$; all of the edges between $N^+(u)$ and $N^-(u)$
  are directed from $N^+(u)$ to $N^-(u)$.
		This ensures that $T$ does not contain a transitive tournament that contains $u$ and elements from both
		$N^+(u)$ and $N^-(u)$.
		Thus,  $u$ is not contained in a $\vec{T}_{k}$; this immediately implies 	$T$ does not have a perfect fractional $\vec{T}_k$-tiling.
\end{proof} 

To prove the upper bound of Theorem~\ref{boundy}, we first collect together some useful observations.

For a hypergraph $H$ and for every $v \in V(H)$, we let $H(v)$
be the \textit{link graph of $v$}, i.e., 
$H(v)$ is the hypergraph with vertex set $V(H)$ and edge set $\{e \setminus \{v \} : e \in E(H) \text{ and } v \in e \}$.
The following lemma is well-known.  We provide a proof for completeness.
\begin{lemma}\label{extendable}
  If $H$ is a $k$-uniform hypergraph on $n$ vertices and, 
  for every $v \in V(G)$, $\nu^*(H(v)) \ge n/k$,
  then $\nu^*(H) = n/k$.
\end{lemma}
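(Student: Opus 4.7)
The plan is to use linear programming duality and complementary slackness. Since $H$ is $k$-uniform on $n$ vertices, we always have $\nu^*(H)\le n/k$, so the statement will follow once I show $\nu^*(H)\ge n/k$, equivalently $\tau^*(H)\ge n/k$. My strategy is to argue by contradiction: assume $\nu^*(H)=\tau^*(H)<n/k$, produce a vertex $v^*$ at which an optimal cover vanishes, and then transfer that cover to the link $H(v^*)$ to contradict the hypothesis.

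To locate $v^*$, fix optimal $x^*$ and $y^*$ for $H$ (a fractional matching and a fractional vertex cover, respectively). By double counting,
\[
\sum_{v\in V(H)}\sum_{e\ni v}x^*(e)=k\,\nu^*(H)<n,
\]
so some $v^*\in V(H)$ fails to be saturated by $x^*$, i.e.\ $\sum_{e\ni v^*}x^*(e)<1$. Complementary slackness then forces $y^*(v^*)=0$. Transferring the cover is immediate: for every $e\in E(H)$ containing $v^*$,
\[
\sum_{u\in e\setminus\{v^*\}}y^*(u)=\sum_{u\in e}y^*(u)-y^*(v^*)\ge 1,
\]
so $y^*$ (viewed as a function on $V(H(v^*))$) is a fractional vertex cover of $H(v^*)$, giving $\tau^*(H(v^*))\le\tau^*(H)<n/k$. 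But LP duality inside $H(v^*)$ together with the hypothesis yields $\tau^*(H(v^*))=\nu^*(H(v^*))\ge n/k$, the desired contradiction.

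The only delicate step is the invocation of complementary slackness to pass from an unsaturated vertex of $x^*$ to a vertex where $y^*$ actually vanishes; the rest is bookkeeping. One minor caveat worth flagging is that the paper requires cover weights to lie in $[0,1]$, whereas the standard LP dual has only the constraint $y(v)\ge 0$; this is harmless, since truncating any weight above $1$ down to $1$ never invalidates a cover inequality, so the identity $\nu^*=\tau^*$ used above applies to the paper's definition as well.
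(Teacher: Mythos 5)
Your proof is correct and follows essentially the same route as the paper's: argue by contradiction, locate a vertex unsaturated by an optimal fractional matching, use complementary slackness to conclude that an optimal fractional cover vanishes there, and transfer that cover to the link to contradict the hypothesis. The extra details you supply (the double-counting argument for the existence of an unsaturated vertex, and the remark about the $[0,1]$ versus $\ge 0$ cover constraints) are accurate but not substantively different from the paper's argument.
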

\begin{proof}
  Suppose that $\nu^*(H(v)) \geq  n/k$
	for every $v \in V(G)$, and $\nu^*(H) < n/k$.
  In a fractional matching of $H$ of size $\nu^*(H)$, there must exist a 
  vertex $v$ in which the sum of the weights on the edges incident to $v$ is strictly less
  than $1$.
  By the complementary slackness theorem from linear programming, 
  this implies that 
  if $w$ is a fractional vertex cover of $H$ of size $\tau^*(H) = \nu^*(H)$, then $w(v) = 0$.
  This means that $w$ is a fractional vertex cover of $H(v)$, so 
  \begin{equation*}
    \nu^*(H(v)) = \tau^*(H(v)) \le \tau^*(H) = \nu^*(H) < n/k,
  \end{equation*}
  a contradiction.
\end{proof}

Let $G$ and $F$ either be a pair of graphs or a pair of directed graphs such that 
$|G| = n$ and $|F| = k$ and let $H := H_F(G)$.
For a vertex $v \in V(G)$, a weight function $w$ on the $(k - 1)$-subsets of $V(G)$
is a \textit{$v$-extendable fractional $F$-tiling of size $r$} if it corresponds
to a fractional matching of size $r$ in the hypergraph $H(v)$.
We have the following corollary to Lemma~\ref{extendable}.
\begin{corollary}\label{cor:extend1}
  Let $G$ and $F$ either be a pair of graphs or a pair of directed graphs
  such that $|G| = n$ and $|F| = k$.
  If, for every $v \in V(G)$, there exists a $v$-extendable fractional $F$-tiling of size
  at least $n/k$, then there exists a perfect fractional $F$-tiling of $G$.
\end{corollary}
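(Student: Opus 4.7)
The plan is that Corollary~\ref{cor:extend1} should follow essentially immediately from Lemma~\ref{extendable} by setting $H := H_F(G)$ and translating the hypotheses and conclusions through the definitions; the entire content of the corollary is a dictionary between the $k$-uniform hypergraph language of Lemma~\ref{extendable} and the tiling language of the corollary.

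First I would let $H := H_F(G)$, so $H$ is a $k$-uniform hypergraph on $V(G)$, where $|V(G)| = n$. Unwinding the definition of a $v$-extendable fractional $F$-tiling: such an object of size $r$ is precisely a weight function on the $(k-1)$-subsets of $V(G)$ that is a fractional matching in the link hypergraph $H(v)$, and whose size as a fractional matching equals $r$. Thus the hypothesis of the corollary says exactly that $\nu^*(H(v)) \geq n/k$ for every $v \in V(G) = V(H)$.

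Next I would invoke Lemma~\ref{extendable} directly to conclude $\nu^*(H) = n/k$, and then translate back: a fractional matching in $H = H_F(G)$ of size $n/k$ assigns to each copy of $F$ in $G$ a weight so that the total weight is $n/k = |V(G)|/|V(F)|$ and each vertex's incident weight is at most $1$; this is by definition a perfect fractional $F$-tiling of $G$. The only even mildly non-trivial observation is that a fractional matching of size $n/k$ in a $k$-uniform hypergraph on $n$ vertices is automatically perfect, but this is just the matching size upper bound combined with the size equality. There is no real obstacle here beyond being careful that the link-graph convention in Lemma~\ref{extendable} matches the definition of $v$-extendability used in the corollary, which it does.
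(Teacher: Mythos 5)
Your proposal is correct and is exactly the paper's argument: the paper's proof of this corollary is the one-line observation that it follows from Lemma~\ref{extendable} applied to $H_F(G)$, and you have simply spelled out the same translation between the hypergraph and tiling languages in more detail.
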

\begin{proof}
  This follows from Lemma~\ref{extendable} if we consider the
  hypergraph $H_F(G)$.
\end{proof}

%

We now prove the upper bound in Theorem~\ref{boundy}.
\begin{lemma}\label{lem:trsk_upper}
  For $k \ge 3$, $\trs{k} \le k (2 \rr{k-1} - k + 1)$.
\end{lemma}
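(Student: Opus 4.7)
The plan is to deduce the lemma from Corollary~\ref{cor:extend1} applied to $H_{\vec{T}_k}(T)$ for a tournament $T$ on $n := k(2\rr{k-1} - k + 1)$ vertices. That is, I will show that for every $v \in V(T)$ there is a $v$-extendable fractional $\vec{T}_k$-tiling of $T$ of size at least $n/k = 2\rr{k-1} - k + 1$. Corollary~\ref{cor:extend1} will then give $\nu^*_k(T) = n/k$ for every tournament $T$ on $n$ vertices, and hence $\trs{k} \le n$, as required.

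To build the $v$-extendable tiling I plan to proceed greedily and integrally. Fix $v$ and initialize $T' := T$. Then repeat the following step as long as $|T'| \ge 2\rr{k-1}$: find a copy of $\vec{T}_k$ in $T'$ containing $v$, record its $k-1$ vertices other than $v$ as a new tile, and delete them from $T'$. Such a $\vec{T}_k$ exists at every feasible step by the observation underlying Example~\ref{ex:trsk_lower}: from $d^+_{T'}(v) + d^-_{T'}(v) = |T'| - 1 \ge 2\rr{k-1} - 1$ one deduces $\max(|N^+_{T'}(v)|, |N^-_{T'}(v)|) \ge \rr{k-1}$, and the corresponding neighborhood therefore contains a copy of $\vec{T}_{k-1}$ which extends with $v$ (as source or sink) to a $\vec{T}_k$. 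The tiles produced are pairwise vertex-disjoint $(k-1)$-subsets of $V(T) \setminus \{v\}$ each extending with $v$ to a $\vec{T}_k$, so assigning each tile weight $1$ gives the desired $v$-extendable fractional tiling.

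The crux is the iteration count, and the value of $n$ in the statement is tuned precisely so that the greedy process lasts for exactly $n/k$ rounds. Each round removes $k-1$ vertices from $T'$, so after $\ell$ rounds $|T'| = n - \ell(k-1)$ and hence the $(\ell+1)$-th round succeeds if and only if $\ell \le (n - 2\rr{k-1})/(k-1) = 2\rr{k-1} - k$. Thus rounds $1, 2, \ldots, 2\rr{k-1} - k + 1 = n/k$ all succeed, yielding $n/k$ tiles of weight $1$ and completing the tiling. There is no substantial obstacle beyond verifying this clean arithmetic, which is essentially built into the lemma's statement; in particular no inductive hypothesis on $\trs{k-1}$ is required.
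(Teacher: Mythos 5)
Your proof is correct, and it reaches the conclusion by a genuinely different route for the key combinatorial step, even though the top-level reduction is the same as the paper's: both arguments fix a tournament $T$ on $n = k(2\rr{k-1}-k+1)$ vertices and apply Corollary~\ref{cor:extend1}, so everything comes down to producing, for each $v$, a $v$-extendable fractional $\vec{T}_k$-tiling of size $n/k$. The paper does this fractionally: it first proves, via a minimum fractional $\vec{T}_{k-1}$-cover and LP duality, that every tournament $S$ on $s \ge \rr{k-1}$ vertices satisfies $\nu^*_{k-1}(S) \ge \bigl(s - (\rr{k-1}-k+1)\bigr)/(k-1)$, and then adds up the fractional $\vec{T}_{k-1}$-tilings of $T[N^+(v)]$ and $T[N^-(v)]$ after a short case analysis on whether $\min(d^+(v),d^-(v)) \ge \rr{k-1}$. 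You instead construct an \emph{integral} matching in the link graph of $v$ by greedily peeling off disjoint copies of $\vec{T}_k$ through $v$ as long as at least $2\rr{k-1}$ vertices remain; your justification that each such round succeeds (the larger of the two current neighbourhoods of $v$ has at least $\rr{k-1}$ vertices and hence contains a $\vec{T}_{k-1}$ that extends through $v$) and your count of the rounds (the round after $\ell$ deletions succeeds iff $n-\ell(k-1)\ge 2\rr{k-1}$, i.e.\ $\ell \le 2\rr{k-1}-k$, giving exactly $n/k$ rounds, with the last round starting on exactly $2\rr{k-1}$ vertices) are both correct. Your version is more elementary — it bypasses the duality argument in the paper's Claim and in fact proves the slightly stronger statement that every vertex of $T$ lies in $n/k$ pairwise disjoint copies of $\vec{T}_k$ — whereas the paper's fractional bound on $\nu^*_{k-1}(S)$ carries no rounding loss and is the more flexible intermediate statement. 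One point worth appreciating in your argument: running the greedy on the whole of $T'$ (switching between $N^+(v)$ and $N^-(v)$ as dictated by which is currently larger) is what makes the arithmetic come out exactly; a neighbourhood-by-neighbourhood integral greedy would incur a floor in each of the two neighbourhoods and could fall just short of $n/k$.
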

\begin{proof}
  Let $T$ be a tournament on $n := k (2 \rr{k-1} - k + 1)$ vertices.
  For an arbitrary $v \in V(T)$, we aim to prove that there exists a $v$-extendable fractional
  $\vec{T}_k$-tiling of size at least $n/k$.  
  By Corollary~\ref{cor:extend1}, this will then prove the lemma.
  To do this, we first prove the following claim.

  \begin{claim}
    If $S$ is a tournament on $s \ge \rr{k-1}$ vertices, then
    $\nu^*_{k-1}(S) \ge \frac{s - (\rr{k-1} -k + 1)}{k-1}$.
  \end{claim}
  \begin{proof} 
    Let $w$ be a fractional $\vec{T}_{k-1}$-cover of $S$ of size $\tau^*_{k-1}(S) = \nu^*_{k-1}(S)$
  and let $v_1, \dotsc, v_s$ be an ordering of $V(S)$ such that
  $w(v_1) \le w(v_2) \le \dotsm \le w(v_s)$.
  Note that $S[\{v_1, \dotsc, v_{\rr{k-1}}\}]$ contains at least one $\vec{T}_{k-1}$, so
  $\sum_{i=1}^{\rr{k-1}} w(v_i) \ge 1$
  and $w(v_{\rr{k-1}}) \ge \frac{1}{k-1}$.  Therefore, 
  \begin{equation*}
    \tau^*_{k-1}(S) = 
    \sum_{i=1}^{\rr{k-1}} w(v_i) +
    \sum_{i=\rr{k-1} + 1}^{s} w(v_i)
    \ge 1 + \frac{s - \rr{k-1}}{k-1}
    = \frac{s - (\rr{k-1} -k + 1)}{k-1}. \qedhere
  \end{equation*}
  \end{proof}

  Recall that 
  \begin{equation}\label{eq:trsk_upper}
    2\rr{k-1} - k + 1 = \frac{n}{k}. 
  \end{equation}
  Let $\nu^{*+} := \nu^*_{k-1}(T[N^+(v)])$
  and $\nu^{*-} := \nu^*_{k-1}(T[N^-(v)])$.
	Note that $v$ forms a copy of $\vec{T}_k$ with any copy of $\vec{T}_{k-1}$ in $T[N^+(v)]$ or $T[N^-(v)]$. 
	In particular, a lower bound on $\nu^{*+}+\nu^{*-}$ gives a lower bound on the size of the largest $v$-extendable fractional $\vec{T}_k$-tiling.
	
  Suppose that $d^+_T(v) \ge d^-_T(v)$.
  If $d^-_T(v) \le \rr{k-1} - 1$, then $d^+_T(v) \ge n - \rr{k-1}$. 
  So, by \eqref{eq:trsk_upper} and the claim,
  \begin{equation*}
    \nu^{*+}
    \ge \frac{d^+_T(v) - (\rr{k-1} - k + 1)}{k-1} 
    \ge \frac{n - (2\rr{k-1} - k + 1)}{k-1} 
    = \frac{n}{k}.
  \end{equation*}
  If $d^-_T(v) \ge \rr{k-1}$, 
  then by the claim, \eqref{eq:trsk_upper}, the fact that $k \ge 3$, and the fact that $d^+_T(v) + d^-_T(v)=n-1$
  we have 
  \begin{equation*}
    \nu^{*+} + \nu^{*-} \ge
    \frac{d^+_T(v) + d^-_T(v) - 2(\rr{k-1} - k + 1)}{k-1}
    \ge \frac{n - (2\rr{k-1} - k + 1)}{k-1}
    = \frac{n}{k}.
  \end{equation*}
  An analogous argument applies if $d^-_T(v) \ge d^+ _T(v)$. 
	So
  there exists a $v$-extendable fractional $\vec{T}_k$-tiling of size at least $n/k$.
\end{proof}

\subsection{Remarks}\label{remarks}
Note that Example~\ref{ex:trsk_lower} and Theorem~\ref{mainobs} together imply that
$\dgo{k} \ge 1 - \frac{1}{2\rr{k-1} - 1}$.
If it can be shown that the lower bound on $\trs{k}$ from Example~\ref{ex:dgok_lower} is also an upper bound; i.e.
$\trs{k} = \frac{k}{k-2}\left(\rr{k}-2\right)$
(which is true for $k = 3$ and $k=4$), then
we have $2 \rr{k-1} \le \frac{k}{k-2}\left(\rr{k}-2\right)$, or
\begin{equation*}
  \rr{k} \ge \frac{2(k-2)}{k} \cdot \rr{k-1} + 2,
\end{equation*}
which would imply that $\rr{k} \ge (2 - o(1))^k$ which almost matches the  Erd\H{o}s--Moser bound of $\rr{k} \le 2^{k-1}$.  In fact, even proving that $\trs{k} \leq (\sqrt{2}-c)\rr{k}$ for some absolute constant $c>0$ would improve the best known lower bound on $\rr{k}$.
It is also worthwhile to note that $\rr{k}$ provides a lower bound on the classical Ramsey number $R(k,k)$.  Indeed if $T$ is a 
tournament on $n := \rr{k} - 1$ vertices with no $\vec{T}_k$, then the graph $G$ on $V(T)$ formed by taking any 
ordering $v_1, \dotsc, v_n$ of the vertices of $T$ and, for every $1 \le i < j \le n$, placing the edge $v_iv_j$ in $G$ 
if the edge in $T$ is directed from $v_i$ to $v_j$
has neither a clique nor an independent set of size $k$.
Therefore, it is possible that a substantial improvement to the upper bound on $\trs{k}$ could give an improvement on the best known lower bound for the diagonal Ramsey numbers.

Note that when $n \ge \frac{k}{k-2}\left(\rr{k} - 2\right)$, 
\begin{equation*}
  \frac{n - (\rr{k} - 2)}{2} \ge 
  \frac{n - \frac{k-2}{k} \cdot n}{2} = \frac{n}{k}.
\end{equation*}
A way one might attempt to prove that $\trs{k} = \frac{k}{k-2} \left(\rr{k} - 2\right)$ 
would be to first prove that equality holds in the following.
\begin{example}\label{ex:trsk}
  For $k \ge 3$, if $\rr{k} \le n \le \frac{k}{k-2}\left(\rr{k} - 2\right)$, then 
  \begin{equation*}
    \min_{T \in \tourn{n}}\left\{ \nu^*(T) \right\} \le \frac{n - (\rr{k} - 2)}{2}.
  \end{equation*}
\end{example}
\begin{proof}
  Construct a tournament $T$ on $n$ vertices by starting with a tournament on $\rr{k}-1$ vertices
  that does not contain a $\vec{T}_k$ and then blow-up one of the vertices to a set $X$
  of size $n - (\rr{k} -2)$. Then place edges between all vertices in $X$ and orient them arbitrarily.
  Because every $\vec{T}_k$ has at least two vertices in $X$, 
  we can cover all of the copies of $\vec{T}_k$ in $T$ by assigning weight $1/2$
  to the vertices in $X$ and $0$ to the vertices in $V(T)\setminus X$. Therefore, 
  \begin{equation*}
    \nu^*(T) = \tau^*(T) \le \frac{|X|}{2} = \frac{n - (\rr{k} - 2)}{2}. \qedhere
  \end{equation*}
\end{proof}
Example~\ref{ex:trsk} is quite similar to Example~\ref{ex:dgok_lower}.
We have verified that, when $\rr{k} \le n \le \frac{k}{k-2}\left(\rr{k} - 2\right)$, 
equality holds in Example~\ref{ex:trsk}
when $k$ is either $3$ or $4$.  We have no evidence that equality holds when $k \ge 5$,
and in light of the discussion above, it is, if true, likely extremely challenging to prove!

\section{The absorbing method and the proof of Theorem~\ref{inone}}\label{sec:abs}
\subsection{Absorbing}
We will apply  the absorbing method of R\"odl, Ruci\'nski and Szemer\'edi (see e.g.~\cite{rrs2}).
The basic idea of the method is to prove that a randomly constructed small set can serve
as an ``absorber'', i.e., we prove that there exists a small set that has the property that
if, after removing this set from the graph,
we can almost tile what is left of the oriented graph, then, using the absorbing set,
we can extend this partial tiling into a perfect tiling over the entire original oriented graph.

To prove that our absorbing sets exist, we will use the following lemma, which follows immediately from a lemma of Lo and Markstr\"om~\cite[Lemma~1.1]{lo2015f}.
Here we write $0 < \alpha \ll \eta < 1$ to mean that $\alpha$ is chosen to be sufficiently
small compared to $\eta$ so that all constraints in the proof of the lemma hold.
\begin{lemma}\label{linking}
  For every $k \ge 3$, 
	$i \ge 1$ and $0 < \alpha \ll \eta < 1$, there exists $n_0$
  such that for every directed graph $G$ on $n \geq n_0$ vertices the following holds.
  If, for every $x,y \in V(G)$, there are at least $\eta n^{ik-1}$ sets $L \subseteq V(G)$ 
  such that $|L| = ik - 1$ and both $G[L \cup \{x\}]$ and $G[L \cup \{y\}]$ contain perfect $\vec{T}_k$-tilings,
  then there exists $A \subseteq V(G)$ such that: 
  \begin{itemize}
    \item $|A| \le \alpha n$; $|A|$ is divisible by $k$; and 
    \item for every $W \subseteq V(G) \setminus A$, such that $|W| \le \alpha^2 n$ and $|W|$ is divisible by $k$,
      we have that $G[A \cup W]$ has a perfect $\vec{T}_k$-tiling.
  \end{itemize}
\end{lemma}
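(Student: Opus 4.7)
The plan is to derive Lemma~\ref{linking} as an immediate specialization of the absorbing lemma of Lo and Markstr\"om \cite[Lemma~1.1]{lo2015f}. In their framework, for a (directed) graph $F$ on $k$ vertices and a host graph $G$, an \emph{$(x,y)$-absorber of order $i$} for $F$ is a set $L \subseteq V(G) \setminus \{x, y\}$ with $|L| = ik - 1$ such that both $G[L \cup \{x\}]$ and $G[L \cup \{y\}]$ admit perfect $F$-tilings. Their lemma then guarantees an absorbing set $A$ of precisely the form asserted in our conclusion, provided every pair of vertices has at least $\eta n^{ik-1}$ absorbers. Taking $F = \vec{T}_k$, our hypothesis is literally theirs, so the result follows at once. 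The only point to verify is that their argument, written for uniform hypergraphs, adapts to the directed-graph setting; this is routine, since every step only counts absorbers and tracks vertex disjointness, never interacting with edge orientations beyond the fact that ``$G[S]$ contains a perfect $\vec{T}_k$-tiling'' is a well-defined property of $S$.

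For self-containedness, here is a sketch of the underlying construction. Choose $A_0 \subseteq V(G)$ at random by including each vertex independently with a suitable probability $p$, a small multiple of $\alpha$. A standard concentration plus first-moment argument shows that with positive probability, $|A_0| \le \alpha n / 2$ and, simultaneously for every pair $x,y$, the set $A_0$ contains many $(x,y)$-absorbers while only a negligible fraction of them pairwise intersect. Deleting one vertex from each intersecting pair, and padding so the total size is divisible by $k$, yields $A$, in which every pair $(x,y)$ still has far more pairwise vertex-disjoint $(x,y)$-absorbers than $\alpha^2 n$.

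To absorb a given $W$ with $|W| \le \alpha^2 n$ divisible by $k$, fix at the outset a perfect $\vec{T}_k$-tiling of $A$ in which each reservoir absorber $L$ sits inside its tile together with a distinguished ``dummy'' vertex. Then process the vertices of $W$ one at a time: for the current $w$, choose an unused $(w, y)$-absorber $L \subseteq A$ (where $y$ is a current dummy), and swap the tile through $y$ for a perfect $\vec{T}_k$-tiling of $L \cup \{w\}$, thereby incorporating $w$ and retiring the dummy. Because the reservoir of disjoint absorbers is much larger than $|W|$, the procedure never stalls, and at the end $G[A \cup W]$ is perfectly $\vec{T}_k$-tiled.

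The main (and essentially only) obstacle is choosing the hierarchy $\alpha \ll \eta < 1$ so that the probabilistic events above hold simultaneously and the greedy swap phase has enough resources; this bookkeeping is done in full generality in \cite{lo2015f}, from which our lemma emerges as the $F = \vec{T}_k$ instance.
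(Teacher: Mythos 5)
Your proposal matches the paper exactly: the paper offers no proof of this lemma beyond the remark that it "follows immediately from a lemma of Lo and Markstr\"om \cite[Lemma~1.1]{lo2015f}", which is precisely your main argument, and the translation from hypergraphs to directed graphs is indeed routine for the reason you give. One small caveat on your optional sketch: the standard construction selects each candidate $(ik-1)$-set independently with small probability rather than each vertex (a random vertex subset of size $\Theta(\alpha n)$ would contain far too many pairwise-intersecting absorbers for the pruning step as described), but since you defer the actual bookkeeping to \cite{lo2015f}, this does not affect the correctness of the proof.
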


Let $k\geq 3$ and $i \geq 1$. Define $\mathcal A(k,i)$ to be the set of all $ \beta>0$ with the following property:  there exists $ \eta>0$ and  $n _0 \in \mathbb N$ so that for each $n\geq n_0$, every $n$-vertex  
oriented graph $G$ with $\delta(G)\geq \beta n$, and any pair $x,y \in V(G)$, there are at least $\eta n^{ik-1}$ sets $L \subseteq V(G)$ 
  such that $|L| = ik - 1$ and both $G[L \cup \{x\}]$ and $G[L \cup \{y\}]$ contain perfect $\vec{T}_k$-tilings.
  Let $A(k,i)$ be the infinimum of  $\mathcal A(k,i)$.  
	Write $A(k):= \inf _{i\geq 1} A(k,i)$.
	We call $A(k)$ the \emph{absorbing threshold} for $\vec{T}_k$-tiling.  

        We will make use of the following simple fact.
\begin{fact}\label{fact:deg}
  For every $r, s$ and $c$ such that $1 \le s \le r$, and $|c| < 1/r$, the following holds.
  If $G$ is a graph or oriented graph on $n$ vertices and $\delta(G) \ge (\frac{r-1}{r} + c)n$, then 
  for every $U \subseteq V(G)$ such that $|U| \ge \frac{s}{r} n$ we have 
  $\delta(G[U]) \ge (\frac{s-1}{s} + c \cdot \frac{r}{s})|U|$.
\end{fact}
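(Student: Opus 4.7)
The plan is a direct degree count followed by a one-line algebraic verification. Fix any $v\in U$. Since the underlying simple graph of $G$ has at most one edge between each pair of vertices, $\delta(G)$ counts exactly the number of neighbours of $v$, so $v$ has at most $n-1-\delta(G)$ non-neighbours in $V(G)\setminus\{v\}$, and in particular at most that many non-neighbours in $U\setminus\{v\}$. Therefore
\begin{equation*}
\deg_{G[U]}(v)\ge (|U|-1)-(n-1-\delta(G))=|U|-n+\delta(G)\ge |U|-\tfrac{n}{r}+cn.
\end{equation*}

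It remains to verify that this lower bound is at least $\bigl(\tfrac{s-1}{s}+c\cdot\tfrac{r}{s}\bigr)|U|$. Rearranging, this is equivalent to
\begin{equation*}
\tfrac{|U|}{s}-\tfrac{n}{r}\ \ge\ c\Bigl(\tfrac{r|U|}{s}-n\Bigr).
\end{equation*}
Writing $|U|=\tfrac{s}{r}n+t$ with $t\ge 0$ (which encodes exactly the hypothesis $|U|\ge\tfrac{s}{r}n$), the left side simplifies to $t/s$ and the right side to $crt/s$, so the desired inequality reduces to $(1-cr)\cdot t/s\ge 0$, which holds because $|c|<1/r$ forces $1-cr>0$ and $t\ge 0$. (When $c\le 0$ the inequality is immediate, since the right-hand side is non-positive while the left is non-negative; only the case $0<c<1/r$ actually uses the strict bound on $|c|$.)

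There is no real obstacle in this argument: the content is simply that deleting $n-|U|$ vertices can decrease $\deg_{G[U]}(v)$ by at most the original non-degree of $v$, which is about $n/r-cn$, and the hypothesis $|U|\ge sn/r$ is calibrated precisely so that this loss leaves at least a $\bigl(\tfrac{s-1}{s}+c\cdot\tfrac{r}{s}\bigr)$ fraction of $|U|$ remaining. The proof works identically for simple graphs and for oriented graphs, since in both settings total degree equals the number of neighbours in the underlying simple graph.
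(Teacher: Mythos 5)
Your proof is correct and takes essentially the same route as the paper: bound the degree loss of each $v\in U$ by its non-degree in $G$, giving $\deg_{G[U]}(v)\ge |U|-(n-\delta(G))\ge |U|-(\tfrac1r-c)n$, and then finish with algebra using $|U|\ge \tfrac{s}{r}n$ and $1-cr>0$. The only difference is cosmetic: the paper bounds $n\le\tfrac{r}{s}|U|$ directly, whereas you substitute $|U|=\tfrac{s}{r}n+t$; both hinge on the same sign condition $c<1/r$.
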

\begin{proof}
  Because $n \le \frac{r}{s} |U|$, we have that $\delta(G[U])$ is at least
  \begin{equation*}
      |U| - (n - \delta(G)) \ge |U| - \left(\frac{1}{r} - c\right)n \ge |U| - \left(\frac{1}{r} - c\right) \frac{r}{s}|U|=\left(\frac{s-1}{s} + c \cdot \frac{r}{s}\right)|U|. \qedhere
  \end{equation*}
\end{proof}

\begin{lemma}\label{trivial_absorbing}
For all $k\geq 3$, $A(k,1)\leq 1-\frac{1}{4\rr{k-1}-2}$.
\end{lemma}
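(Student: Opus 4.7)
My plan is to show that every $\beta > 1 - \frac{1}{4\rr{k-1} - 2}$ lies in $\mathcal{A}(k, 1)$, which gives the claimed bound on $A(k, 1)$. Write $r := \rr{k-1}$, fix such a $\beta$, set $\ep := \beta - \bigl(1 - \tfrac{1}{4r-2}\bigr) > 0$, and let $G$ be any oriented graph on $n \geq n_0$ vertices with $\delta(G) \geq \beta n$. Fix arbitrary $x, y \in V(G)$. Since $|L| = k - 1$, the requirement that $G[L \cup \{x\}]$ admit a perfect $\vec{T}_k$-tiling simply says that $G[L \cup \{x\}]$ is itself a copy of $\vec{T}_k$, and similarly for $y$; in particular any valid $L$ lies inside $S := N(x) \cap N(y)$, which satisfies $|S| \ge 2 \beta n - n - O(1)$.

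I partition $S$ into four classes by the orientation of each vertex relative to $x$ and $y$:
\begin{equation*}
  A := N^-(x) \cap N^-(y), \quad B := N^+(x) \cap N^-(y), \quad C := N^-(x) \cap N^+(y), \quad D := N^+(x) \cap N^+(y).
\end{equation*}
The key observation is that whenever $L$ is a copy of $\vec{T}_{k-1}$ contained entirely in one of $A, B, C, D$, both $L \cup \{x\}$ and $L \cup \{y\}$ are automatically copies of $\vec{T}_k$: every vertex of $L$ is oriented identically with respect to $x$, so $x$ attaches as either the common source or the common sink of the transitive order on $L$, and likewise for $y$. Hence it suffices to produce $\Omega(n^{k-1})$ copies of $\vec{T}_{k-1}$ sitting inside one of the four classes.

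By pigeonhole some $X \in \{A, B, C, D\}$ has $|X| \ge |S|/4 \ge \tfrac{r-1}{4r-2} n + \Omega(\ep n)$. Each vertex of $G$ has at most $(1-\beta)n = \bigl(\tfrac{1}{4r-2} - \ep\bigr) n$ non-neighbours, so a short calculation yields that the underlying simple graph of $G[X]$ has minimum degree at least $\bigl(1 - \tfrac{1}{r-1}\bigr)|X| + \Omega(\ep n)$. Consequently its edge count exceeds the Tur\'an threshold $t(|X|, r - 1)$ for containing $K_r$ by $\Omega(\ep n^2)$ edges; by a greedy common-neighbourhood argument (or Erd\H{o}s--Simonovits supersaturation), the underlying simple graph of $G[X]$ then contains $\Omega(n^r)$ copies of $K_r$. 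Since $G$ is oriented, each such $K_r$ is a tournament on $r = \rr{k-1}$ vertices in $G$, so it contains a copy of $\vec{T}_{k-1}$ by definition of $\rr{k-1}$. As a given $(k-1)$-subset of $X$ lies in at most $O(n^{r-k+1})$ of these $K_r$'s, we obtain $\Omega(n^{k-1})$ distinct $\vec{T}_{k-1}$-subsets in $X$; combined with the key observation this produces the required $\eta n^{k-1}$ absorbers for some $\eta = \eta(\ep, k) > 0$, showing $\beta \in \mathcal{A}(k, 1)$.

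The main obstacle is that the pigeonhole estimate $|X| \ge \tfrac{r-1}{4r-2} n$ coincides exactly with the Tur\'an threshold $(r-1)(1 - \beta)n$ needed to force a single $K_r$ inside $G[X]$; the argument is tight at the claimed bound. The strict inequality $\beta > 1 - \tfrac{1}{4r-2}$ is therefore essential: the $\Omega(\ep n)$ slack it provides both pushes $|X|$ strictly past the Tur\'an threshold and drives supersaturation to amplify a single $K_r$ into $\Omega(n^r)$ copies. Intuitively, the denominator $4 \rr{k-1} - 2$ arises from balancing the factor of $2$ from common-neighbourhood inclusion-exclusion, the factor of $4$ from the $\pm\pm$ partition of $S$, and the Tur\'an density $1 - \tfrac{1}{r-1}$ needed to embed $K_r$.
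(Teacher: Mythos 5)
Your proof is correct, and it is essentially the paper's argument with the pigeonhole and supersaturation steps interchanged. The paper first applies supersaturation in all of $N(x)\cap N(y)$ to find $\Omega(n^{4\rr{k-1}-3})$ tournaments on $4\rr{k-1}-3$ vertices and then pigeonholes the vertices of each such tournament into the four sign classes to extract a $\vec{T}_{k-1}$ lying in a single class; you pigeonhole first to isolate one class $X$ of size at least $|N(x)\cap N(y)|/4$ and then supersaturate inside $X$ for copies of $K_{\rr{k-1}}$, each of which is a tournament containing a $\vec{T}_{k-1}$. The arithmetic is identical in both orders (each reduces to $(4\rr{k-1}-2)\beta > 4\rr{k-1}-3$), so the two routes yield the same threshold, and your final counting step (dividing out the $O(n^{\rr{k-1}-k+1})$ multiplicity) correctly converts the $K_{\rr{k-1}}$ count into the required $\eta n^{k-1}$ linking sets.
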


\begin{proof}
Let $0<\eta\ll \ep \ll 1/k$, let $n$ be sufficiently large and let $G$ be an oriented graph on $n$ vertices with $\delta(G)\geq \left(1-\frac{1}{4\rr{k-1}-2}+\ep\right)n$.  Let $x,y \in V(G)$ and set $U := N(x) \cap N(y)$, 
  $r := 4\rr{k-1} - 2$, and $s := 4\rr{k-1} - 4$.
  Since $|U| \ge sn/r$, Fact~\ref{fact:deg} (with $c = \ep$) implies that 
  \begin{equation*}
    \delta(G[U]) \geq \left(\frac{s-1}{s} + \ep \cdot \frac{r}{s} \right)|U|.
  \end{equation*}
  So by supersaturation\footnote{That is, as $G$ has minimum degree significantly above the threshold for containing a tournament on $s+1$ vertices.}
	there exist at least $\eta n^{s+1}$ tournaments~$T$ on 
  $(s+1)$ vertices in $G[U]$.
  Since $s + 1 = 4\rr{k-1} - 3$, 
  by the pigeonhole principle, for every such tournament~$T$, 
  there exists a subtournament of size at least $\rr{k-1}$ 
  in one of the four sets: $N^+(x) \cap N^+(y)$,
  $N^+(x) \cap N^-(y)$, $N^-(x) \cap N^+(y)$, and $N^-(x) \cap N^-(y)$, which partition $U$.
  This, in turn, implies that there exists $L \subseteq V(T)$ such that
  $G[L \cup \{x\}]$ and $G[L \cup \{y\}]$ are $\vec{T}_k$.
  Therefore, we have at least $\eta n^{k-1}$ of the desired sets.
	
 The choice of $\ep >0$ can be made arbitrarily small, thus we obtain that $A(k,1)\leq 1-\frac{1}{4\rr{k-1}-2}$.
	
\end{proof}

\begin{lemma}\label{frac+absorb}
  For every $k \ge 3$, $i \geq 1$ and $\ep > 0$, there exists $n_0$ such that
  for every $n \ge n_0$ that is divisible by $k$ the following holds.
  If $G$ is an oriented graph on $n$ vertices and 
  \begin{equation*}
    \delta(G) \ge \max \left\{ \dgo{k} + \ep, A(k,i)+\ep \right\}n,
  \end{equation*}
  then $G$ has a perfect $\vec{T}_k$-tiling.
\end{lemma}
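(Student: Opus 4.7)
The plan is to apply the absorbing method of R\"odl, Ruci\'nski and Szemer\'edi in its most standard form: first set aside a small ``absorbing'' set $A$ using Lemma~\ref{linking}, then apply the near-perfect tiling property encoded by $\dgo{k}$ to what remains, and finally use $A$ to absorb the tiny uncovered remainder into a perfect $\vec{T}_k$-tiling of all of $G$.

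Fix a hierarchy of constants $0 < \gamma \ll \alpha \ll \ep$ (with $\gamma \le \alpha^2$) and let $n$ be sufficiently large. Since $\delta(G) \ge (A(k,i)+\ep)n$, the definition of $A(k,i)$ provides an $\eta>0$ such that for every pair $x,y \in V(G)$ there are at least $\eta n^{ik-1}$ sets $L \subseteq V(G)$ of size $ik-1$ for which both $G[L\cup\{x\}]$ and $G[L\cup\{y\}]$ admit perfect $\vec{T}_k$-tilings. Lemma~\ref{linking} then yields a set $A \subseteq V(G)$ with $|A|\le \alpha n$, $k\mid |A|$, such that $G[A\cup W]$ has a perfect $\vec{T}_k$-tiling for every $W \subseteq V(G)\setminus A$ with $|W|\le \alpha^2 n$ and $k\mid |W|$.

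Now set $G' := G - A$, so that $n' := |G'| = n - |A| \ge (1-\alpha)n$. Because removing $A$ decreases each vertex's degree by at most $|A| \le \alpha n$, we have
\[
\delta(G') \ge \delta(G) - |A| \ge (\dgo{k} + \ep - \alpha)n \ge \left(\dgo{k} + \tfrac{\ep}{2}\right) n',
\]
provided $\alpha$ is chosen small enough in terms of $\ep$. Since $\dgo{k} + \ep/2 > \dgo{k}$, the definition of $\dgo{k}$, applied with error parameter $\gamma$, yields (for $n$ large) a $\vec{T}_k$-tiling $\mathcal{T}$ of $G'$ that covers all but a set $W$ of at most $\gamma n \le \alpha^2 n$ vertices. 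Because $k$ divides each of $n$, $|A|$, and $k|\mathcal{T}|$, it also divides $|W| = n' - k|\mathcal{T}|$. Therefore the absorbing property of $A$ supplies a perfect $\vec{T}_k$-tiling of $G[A\cup W]$, which together with $\mathcal{T}$ gives the desired perfect $\vec{T}_k$-tiling of $G$.

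There is no real obstacle here: the argument is entirely the standard absorb/almost-tile/absorb skeleton, and Lemma~\ref{linking} packages the delicate random construction of $A$. The only point that requires care is the constant hierarchy: $\alpha$ must be small compared to $\ep$ so that deleting $A$ does not destroy the minimum-degree condition needed to invoke $\dgo{k}$, and $\gamma$ must be small compared to $\alpha$ (specifically $\gamma \le \alpha^2$) so that the leftover $W$ lies within the absorbing capacity of $A$.
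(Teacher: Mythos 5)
Your proof is correct and follows essentially the same route as the paper: invoke Lemma~\ref{linking} via the definition of $A(k,i)$ to build the absorber $A$, almost-tile $G-A$ using the degree condition relative to $\dgo{k}$, and absorb the leftover. The extra care you take with the divisibility of $|W|$ and the constant hierarchy is fine and matches what the paper leaves implicit.
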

\begin{proof}
  Let $0 < \alpha \ll \eta \ll \ep, 1/k,1/i$.   Let $G$ be a sufficiently large oriented graph as in the statement of the lemma.

By the degree condition  we may apply Lemma~\ref{linking} 
 to get a set $A$ such that $|A| \le \alpha n$, 
  $|A|$ is divisible by $k$, 
  and, for every $W \subseteq V(G) \setminus A$ such that $|W| \le \alpha^2 n$ and
  $|W|$ is divisible by $k$, the oriented graph $G[A \cup W]$ has a perfect
  $\vec{T}_k$-tiling.
  Since $n$ is sufficiently large and $\delta(G - A) \ge (\dgo{k}+\ep /2) |G - A|$, we
  can tile $G - A$ so that if $W$ is the set of uncovered vertices, then $|W| \le \alpha^2 n$.
  Since then $G[A \cup W]$ has a perfect $\vec{T}_k$-tiling, 
  we obtain a perfect $\vec{T}_k$-tiling of $G$.
\end{proof}

\subsection{Proof of Theorem~\ref{inone}}\label{sec:inone}
With the absorbing lemma to hand, it is now straightforward to deduce Theorem~\ref{inone} from our previous results.

\begin{proof}[Proof of Theorem~\ref{inone}]
  From Theorem~\ref{mainobs} and Lemma~\ref{lem:trsk_upper} we have that
  $\dgo{k} \le 1 - \frac{1}{k (2 \rr{k-1} - k + 1)}$.
  Since $k (2 \rr{k-1} - k + 1) \ge k \cdot \rr{k-1} \ge 4 \rr{k-1} - 2$, 
  the first part of Theorem~\ref{inone} then follows from Lemmas~\ref{trivial_absorbing} and \ref{frac+absorb}.
	
	The second part of the theorem follows by the inequality in the statement of Theorem~\ref{yuster_bound}.
\end{proof}

\section{$\vec{T}_4$-tiling - Proof of Theorem~\ref{thm:T4tiling}}\label{sec:T4}

Note that $\rr{4} = 8$.
Example~\ref{ex:dgok_lower} with ($\gamma = 1/n$) implies the second part of the theorem.
For the first part of the theorem, we will show that $\dgo{4} \le \frac{11}{12}$ (Proposition~\ref{prop:frac-12-4}) and $A(4,2)\leq \frac{11}{12}$ (Corollary~\ref{cor:A(4,2)}) which together with Lemma~\ref{frac+absorb} will complete the result. 

Note that we sometimes call $\vec{T}_3$ the \textit{transitive triangle}.

\begin{proposition}\label{prop:frac-12-4}
  $\trs{4} = 12$ and $\dgo{4} =\frac{11}{12}$.
\end{proposition}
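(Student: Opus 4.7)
The proposition has two parts, $\trs{4} = 12$ and $\dgo{4} = \tfrac{11}{12}$. The plan is to establish $\trs{4} = 12$ first; once that is in hand, $\dgo{4} = \tfrac{11}{12}$ is immediate: Example~\ref{ex:dgok_lower} with $k = 4$ (using $\rr{4} = 8$) yields $\dgo{4} \ge 1 - \tfrac{2}{4 \cdot 6} = \tfrac{11}{12}$, while Theorem~\ref{mainobs} together with $\trs{4} \le 12$ yields $\dgo{4} \le \dgs{4} \le 1 - \tfrac{1}{\trs{4}} = \tfrac{11}{12}$. The lower bound $\trs{4} \ge 12$ follows from Example~\ref{ex:trsk} at $n = 11$: since $\rr{4} = 8 \le 11 \le \tfrac{k}{k-2}(\rr{k}-2) = 12$, the construction there produces an $11$-vertex tournament with $\nu^*_4 \le (11-6)/2 = \tfrac{5}{2} < \tfrac{11}{4}$, so not every $11$-vertex tournament admits a perfect fractional $\vec{T}_4$-tiling.

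For the upper bound $\trs{4} \le 12$, my plan is to apply Corollary~\ref{cor:extend1}: show that for every tournament $T$ on $12$ vertices and every $v \in V(T)$ there is a $v$-extendable fractional $\vec{T}_4$-tiling of size at least $3$. Set $X^\pm = N^\pm_T(v)$ and $d^\pm = |X^\pm|$, so $d^+ + d^- = 11$ and WLOG $d^+ \ge 6$. Every $\vec{T}_3$ in $T[X^+]$ or $T[X^-]$ extends with $v$ to a $\vec{T}_4$, so lower-bounding $\nu^*_3(T[X^+]) + \nu^*_3(T[X^-])$ suffices. Using $\trs{3} = 6$ (so $\nu^*_3(T[X^+]) = d^+/3$) and the claim inside Lemma~\ref{lem:trsk_upper} ($\nu^*_3(T[X^-]) \ge (d^- - 1)/3$ when $d^- \ge \rr{3} = 4$): when $d^- \ge 4$ the sum is at least $\tfrac{10}{3} > 3$; when $d^- \le 2$ we already have $\nu^*_3(T[X^+]) \ge 3$; and when $d^- = 3$ with $T[X^-]$ transitive the sum is at least $\tfrac{8}{3} + 1 > 3$.

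The main obstacle is the remaining case $d^- = 3$ with $T[X^-]$ a cyclic triangle (and $|X^+| = 8$), where the $\vec{T}_3$s contained in the two neighbourhoods supply only $\tfrac{8}{3}$, leaving a deficit of $\tfrac{1}{3}$ that must be filled by \emph{cross} $\vec{T}_4$s containing $v$ together with vertices from both $X^+$ and $X^-$. With $X^- = \{a, b, c\}$ and $a \to b \to c \to a$, the relevant $3$-sets $L$ (for which $T[L \cup \{v\}] = \vec{T}_4$) are of two shapes: $\{w, x_1, x_2\}$ with $w \in X^-$ and $x_1 x_2$ an arc of $T[X^+ \cap N^+(w)]$ (producing the ordering $w \to v \to x_1 \to x_2$), or $\{w_1, w_2, x\}$ with $(w_1, w_2)$ an arc of the $3$-cycle and $x \in X^+ \cap N^+(w_1) \cap N^+(w_2)$ (producing $w_1 \to w_2 \to v \to x$). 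My plan is a double count of the $24$ edges between $X^-$ and $X^+$ paired with a small LP-feasibility check: unless the orientation is extremal, one can slightly weaken the fractional $\vec{T}_3$-tiling of $T[X^+]$ on a few vertices to free enough capacity to add cross configurations of total weight $\tfrac{1}{3}$.

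The one truly extremal regime is when every arc between $X^-$ and $X^+$ is directed from $X^+$ to $X^-$; then no cross $\vec{T}_4$ through $v$ exists and Corollary~\ref{cor:extend1} breaks at $v$. In that case the plan is to sidestep the Corollary and build a perfect (integer) $\vec{T}_4$-tiling of $T$ directly: choose any $6$ vertices of $X^+$ and partition them into two $\vec{T}_3$s using $\tr{3} = 6$; the remaining $2$ vertices of $X^+$ form an arc $x_7 \to x_8$; then $(v, \vec{T}_3) \cup (\vec{T}_3, a) \cup (x_7, x_8, b, c)$ gives three vertex-disjoint $\vec{T}_4$s (the third works because $b \to c$ and $x_7, x_8$ beat both $b$ and $c$ by hypothesis). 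Integrating the LP argument for the ``generic'' cyclic case with this fallback construction for the extremal one is what I expect to be the technical heart of the proof.
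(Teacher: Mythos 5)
Your overall architecture matches the paper's: lower bounds from Examples~\ref{ex:dgok_lower} and~\ref{ex:trsk}, upper bound via Corollary~\ref{cor:extend1} with a case analysis on $d^{\pm}(v)$, the only hard case being $d^-(v)=3$ with $N^-(v)$ cyclic, where one must fall back on exhibiting a perfect integer $\vec{T}_4$-tiling of $T$ directly. The easy cases ($d^-\le 2$, $d^-\ge 4$, and $d^-=3$ transitive) are handled correctly.

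However, there is a genuine gap in the hard case: you misidentify the extremal regime. The $v$-extendable fractional tiling of size $3$ fails to exist exactly when there is \emph{no} cross $\vec{T}_4$ through $v$ at all (then the link hypergraph of $v$ is just $H_3(T[X^+])$, whose fractional matching number is $8/3<3$, so no LP reweighting can help). This happens whenever (a) every $w\in X^-$ has at most one out-neighbour in $X^+$ (killing your shape~1) and (b) no arc $(w_1,w_2)$ of the $3$-cycle has a common out-neighbour in $X^+$ (killing your shape~2) --- for instance when $a,b,c$ each have exactly one out-neighbour in $X^+$ and these are pairwise distinct. This is strictly broader than ``all $24$ arcs point from $X^+$ into $X^-$'', and your fallback construction breaks there: the tile $(x_7,x_8,b,c)$ needs $x_7,x_8$ to beat both $b$ and $c$, and the tile $(\vec{T}_3,a)$ needs $a$ to lose to all three chosen vertices, neither of which is guaranteed. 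The repair (which is what the paper does) is to run the direct construction under the weaker hypothesis (a): pick two vertices of $X^-$ and two of their ($\ge 6$) common in-neighbours in $X^+$ to form one $\vec{T}_4$; then the remaining vertex $u$ of $X^-$ has at most one out-neighbour among the remaining six vertices of $X^+$, so of the two disjoint $\vec{T}_3$'s partitioning them, at least one consists entirely of in-neighbours of $u$; pair $u$ with that one and $v$ with the other. Conversely, when a cross $\vec{T}_4$ \emph{does} exist, your double-count/LP machinery is unnecessary: assign weight $1$ to the cross $3$-set and a perfect fractional $\vec{T}_3$-tiling (of size $\ge 2$) to the $\ge 6$ untouched vertices of $X^+$, giving size $\ge 3$ immediately.
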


\begin{proof}
As $\rr{4} = 8$, the lower bound in Theorem~\ref{boundy} gives $\trs{4} \ge 12$ and Example \ref{ex:dgok_lower} gives $\dgo{4} \geq \frac{11}{12}$.  Thus, 
it suffices to show that $\trs{4} \le 12$ as together with Theorem~\ref{mainobs} this implies $\dgo{4} \leq  \frac{11}{12}$. 
Let $T$ be a tournament on $12$ vertices.
  It suffices, by Corollary~\ref{cor:extend1}, to show that for every $v\in V(T)$ there exists a $v$-extendable fractional $\vec{T}_4$-tiling of size at least $3$.  Recall that $\tr{3}=6$, so every tournament on $3k\geq 6$ vertices has a perfect $\vec{T}_3$-tiling.

  Let $v\in V(T)$ and suppose without loss of generality that $d^+(v)\geq d^-(v)$.  If $d^+(v)\geq 9$, then we have three disjoint $\vec{T}_3$'s in $N^+(v)$ and we are done.  If $d^-(v)\geq 4$, then since $d^+(v)\geq 6$, we have two disjoint $\vec{T}_3$'s in $N^+(v)$ and one $\vec{T}_3$ in $N^-(v)$.  So the only case left to deal with is when $d^-(v)=3$ and $d^+(v)=8$.  We would be done as before if there exists a $\vec{T}_4$ that contains $v$ and has exactly one vertex in $N^-(v)$ and two vertices in $N^+(v)$, so assume such a $\vec{T}_4$ does not exist. This implies that 
  \begin{equation}\label{oneout}
    \text{every vertex in $N^-(v)$ has at most one out-neighbor in $N^+(v)$.}
  \end{equation} 
  In this case we find a perfect $\vec{T}_4$-tiling of $T$ directly.  By \eqref{oneout}, there exists a $\vec{T}_4$, say $F$, such that $F$ has two vertices in both $N^-(v)$ and $N^+(v)$. Let $F_1$ and $F_2$ be two disjoint transitive triangles contained in $N^+(v) \setminus V(F)$ and let $u$ be the vertex in $N^-(v) \setminus V(F)$.  By \eqref{oneout}, for either $F_1$ or $F_2$, say $F_1$, we have that $\{u\} \cup F_1$ induces a $\vec{T}_4$ (since $u$ has only in-neighbors in $F_1$). Then, $F$, $T[\{u\} \cup F_1]$, and $T[\{v\} \cup F_2]$ form the desired perfect $\vec{T}_4$-tiling of the tournament $T$.
\end{proof}

To complete the proof of Theorem~\ref{thm:T4tiling}, we  show that $A(4,2) \le \frac{11}{12}$.
Let $G$ be an oriented graph on $n$ vertices and $\delta(G) \ge \left( \frac{11}{12} + \varepsilon \right) n$. 
It is sufficient to show that,
for every pair of distinct vertices $x$ and $y$ in $G$, there are at least $\Omega(n^{7})$ 
sets $L$, each of order $7$, such that both $G[L \cup \{x\}]$ and $G[L \cup \{y\}]$ contain
two disjoint copies of $\vec{T}_4$.
At a high-level, we achieve this by noting that, by the minimum total degree condition, Fact~\ref{fact:deg} and supersaturation, 
there are $\Omega(n^{11})$ tournaments on $11$ vertices in $G[N(x) \cap N(y)]$.
The following lemma, then implies that there are $\Omega(n^7)$ of the desired sets $L$.
We provide the details of this argument in our proof of Corollary~\ref{cor:A(4,2)}, 
which appears after the proof of Lemma~\ref{lemma:main-absorb}.

\begin{lemma}\label{lemma:main-absorb}
Let $G$ be an oriented graph and let $x,y\in V(G)$ and $T\subseteq V(G)\setminus \{x,y\}$. 
 If $\{x\}\cup T$ and $\{y\}\cup T$ each induce a tournament on $12$ vertices, then there exists $Z\subseteq T$ such that $|Z|=7$ and $G[\{x\} \cup Z]$ and $G[\{y\} \cup Z]$ both contain a perfect $\vec{T}_4$-tiling.  
\end{lemma}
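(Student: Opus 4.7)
The plan is to locate a $\vec{T}_3$ $S \subseteq T$ that is a \emph{common extension} of $x$ and $y$, meaning both $\{x\} \cup V(S)$ and $\{y\} \cup V(S)$ induce copies of $\vec{T}_4$. Given such an $S$, the subtournament on $T \setminus V(S)$ has $|T| - 3 = 8 = \rr{4}$ vertices, so it contains a copy of $\vec{T}_4$, say $F$; then $Z := V(S) \cup V(F)$ has size $7$ and delivers both required tilings.

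To identify common extensions I will partition $T$ by \emph{type}: for each $v \in T$ set $\sigma_w(v) = +$ if $w \to v$ and $-$ otherwise, for $w \in \{x,y\}$, and let $U_1, U_2, U_3, U_4 \subseteq T$ be the classes where $\sigma(v) := (\sigma_x(v), \sigma_y(v))$ equals $(+,+), (+,-), (-,+), (-,-)$, respectively. A direct case check shows that a $\vec{T}_3$ with transitive order $(v_1, v_2, v_3)$ is a common extension exactly when the type sequence $\sigma(v_1), \sigma(v_2), \sigma(v_3)$ is non-decreasing in the coordinate-wise Boolean order on $\{-,+\}^2$ (equivalently, all $-$'s precede all $+$'s in each coordinate). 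In particular, any $\vec{T}_3$ contained in a single class $U_i$ qualifies.

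If some $|U_i| \geq 4$, then $\rr{3} = 4$ immediately supplies the desired $\vec{T}_3$ in $U_i$. Otherwise every $|U_i| \leq 3$ and, as the classes partition the $11$ vertices of $T$, their sizes must be $3,3,3,2$; any transitive size-$3$ class again yields a $\vec{T}_3$ in a single class. The remaining, main case is when all three size-$3$ classes are cyclic triangles. Using the Klein-$4$ symmetries generated by swapping $x \leftrightarrow y$ (permuting $U_2 \leftrightarrow U_3$) and reversing all arcs (permuting $U_1 \leftrightarrow U_4$ and $U_2 \leftrightarrow U_3$), the size-$2$ class may be assumed WLOG to be either $U_4$ or $U_2$; I will focus on $U_4 = \{d_1, d_2\}$ with, say, $d_1 \to d_2$, treating the second sub-case analogously.

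In this hard case a common extension must combine vertices from multiple classes along one of the two maximal chains in the type order, namely $U_4 < U_3 < U_1$ and $U_4 < U_2 < U_1$. The key leverage is the cyclic structure of $U_1, U_2, U_3$: within any cyclic $U_i$ every pair of vertices is joined by a directed edge, so any vertex from a comparable class with two out-neighbours (or two in-neighbours) inside $U_i$ automatically closes off a monotone $\vec{T}_3$. I plan to carry out a pigeonhole case analysis on the $9$ arcs between $U_1$ and each of $U_2, U_3$ and on the $6$ arcs between $U_4$ and each cyclic class: each such pair either is lopsided enough along its comparable chain to produce the desired vertex, or is lopsided in the opposite direction, in which case the many arcs from the cyclic class into $U_4$ (combined with the few arcs $d_1, d_2$ can send back) force a three-class transitive triangle $d_j \to c \to a$ or $d_j \to b \to a$ (with $d_j \in U_4$, $c \in U_3$, $b \in U_2$, $a \in U_1$), each of which is itself a monotone $\vec{T}_3$. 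The principal obstacle is to rule out the adversarial configuration in which every comparable pair is simultaneously oriented badly; the key observation is that in this regime the out-degrees of $d_1, d_2$ into $U_1 \cup U_2 \cup U_3$ are so constrained that a suitable three-class path through a cycle is unavoidable.
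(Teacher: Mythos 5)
Your reduction and setup track the paper's proof closely: the partition of $T$ into the four types, the characterization of common $\vec{T}_3$-extensions as transitive triangles whose types are non-decreasing in the partial order, and the observation that a common $\vec{T}_3$-extension together with a $\vec{T}_4$ in the remaining $8=\rr{4}$ vertices yields the desired $Z$. The gap is that your entire plan rests on always finding a common $\vec{T}_3$-extension, and this fails in exactly the hard case you identify. Here is a configuration with no monotone transitive triangle at all. Let $A$, $B$, $C$ be the classes of types $(+,+)$, $(-,+)$, $(+,-)$, each a cyclic triangle $\{a_1,a_2,a_3\}$, $\{b_1,b_2,b_3\}$, $\{c_1,c_2,c_3\}$, and let $D=\{d_1,d_2\}$ be the class of type $(-,-)$. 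Orient $b_i\to a_i$ and $a_j\to b_i$ for $j\neq i$; likewise $c_i\to a_i$ and $a_j\to c_i$ for $j\neq i$; set $d_1\to b_1$, $d_2\to b_2$, $d_1\to c_1$, $d_2\to c_2$, direct every other edge incident to $D$ into $D$, and orient the edges between $B$ and $C$ arbitrarily. Checking the possible type patterns of a monotone transitive triangle: same-class triples are cyclic or too small; any triple meeting both $B$ and $C$ contains a violating edge; the matching-like orientation between comparable classes kills every pattern with two vertices in one class; and the chains $d\to b_i\to a_i$ and $d\to c_i\to a_i$ never close up because no edge is directed from $D$ to $A$. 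So no single $\vec{T}_3$ in $T$ extends both $x$ and $y$, and your ``key observation'' that a three-class monotone path is unavoidable is false.

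This is precisely why the paper does not look for a common $\vec{T}_3$ in the hard case: instead it constructs a $7$-set $Z$ whose two perfect tilings of $G[\{x\}\cup Z]$ and $G[\{y\}\cup Z]$ split $Z$ \emph{differently} --- the triangle completed by $x$ lies in $N^{+,+}\cup N^{+,-}$ while the triangle completed by $y$ lies in $N^{+,+}\cup N^{-,+}$, and the two leftover $4$-sets are correspondingly different copies of $\vec{T}_4$. Your proposal contains no mechanism for producing such an asymmetric linking set, and the pigeonhole analysis you sketch for the hard case is in any event only a plan rather than an argument. To repair the proof you would need to add this second construction (or something equivalent), which is the substantial part of the paper's proof of the lemma.
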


\begin{proof}
  For clarity, we will write $u \to v$ if the edge $uv \in E(G)$ is directed from $u$ to $v$.
  Let $K$ be the tournament induced in $G$ by the vertex set $T$.
  Call $Z \subseteq T$ a \textit{linking set} if $|Z| \in \{3, 7\}$ and
  $G[\{x\} \cup Z]$ and $G[\{y\} \cup Z]$ both have a perfect $\vec{T}_4$-tiling.
  Suppose $Z$ is a linking set.
  If $|Z| = 7$, then we clearly satisfy the conclusion of the lemma.
  If $|Z| = 3$, then, because $|T \setminus Z| = 8 = \rr{4}$,
  there exists $T' \subseteq T \setminus Z$ such that $G[T']$ is a $\vec{T}_4$, so
  with $Z \cup T'$ playing the role of $Z$ we satisfy the conclusion of the lemma.
  Suppose, for a contradiction, that no linking set exists.

Let $N^{\sigma_x,\sigma_y}:=N^{\sigma_x}_T(x)\cap N^{\sigma_y}_T(y)$ for $\sigma_x, \sigma_y\in\{+,-\}$, and 
let 
\begin{equation*}
  \mathcal{P} := \{N^{+,+}, N^{+,-}, N^{-,+}, N^{-,-}\},
\end{equation*}
and note that $\mathcal{P}$ is a partition of $V(T)$.

Let $<$ be the partial order of $\mathcal{P}$ given by $N^{-,-}<N^{-,+}<N^{+,+}$ and $N^{-,-}<N^{+,-}<N^{+,+}$,
and let $uw \in E(K)$.
We say that $uw$ \textit{violates the partial order} if $u\in U$ and $w\in W$ for distinct sets $U, W \in \mathcal{P}$ 
and either
\begin{itemize}
  \item $U$ and $W$ are incomparable, or
  \item $U < W$ and $w\to u$.
\end{itemize}
  Otherwise, we say that $uw$ \textit{satisfies the partial order}.
Note that, for every edge $uw \in E(K)$,
\begin{equation}\label{iffpo}
  \text{both $xuw$ and $yuw$ are transitive triangles} \iff \text{$uw$ satisfies the partial order.}
\end{equation}

\begin{claim}\label{claim:tri}
  Every transitive triangle $abc$ in $K$ contains at least one edge that violates the partial order.
\end{claim}
\begin{proof}
  Otherwise, by \eqref{iffpo}, both $\{a,b,c,x\}$ and $\{a,b,c,y\}$ induce copies of $\vec{T}_4$,
  so $\{a, b, c\}$ is a linking set.
\end{proof}

This immediately implies the following.
\begin{claim}\label{deg1clm}
  For every pair of distinct sets $U,W \in \mathcal{P}$ that are comparable,
  the edges between $U$ and $W$ that satisfy the partial order form a matching.
\end{claim}

Because every tournament on four vertices contains a transitive triangle, and edges that violate
the partial order must intersect two sets in $\mathcal{P}$, Claim~\ref{claim:tri} implies the following.
\begin{claim}\label{cycclm}
  For every $U \in \mathcal{P}$, we have $|U| \le 3$. 
  In particular, exactly one set in $\mathcal{P}$ has order $2$
  and the other sets in $\mathcal{P}$ each have order $3$.
  Furthermore, if $U \in \mathcal{P}$ and $|U| = 3$, then $U$ induces a cyclic triangle.
\end{claim}

Without loss of generality, suppose $|N^{-,-}| + |N^{-,+}| \le |N^{+,+}| + |N^{+,-}|$,
so either $N^{-,-}$ or $N^{-,+}$ is the set in $\mathcal{P}$ of order $2$.

\begin{claim}\label{a1b1clm}
  There exists $b \in N^{-,+}$,  $c \in N^{+,-}$, and $D \subseteq N^{-,-}$ such that 
  $|D| = 2$, and, for every $d \in D$, we have $b \to d$ and $c \to d$, i.e.,
  all of the edges between $b$ and $D$ and all of the edges between $c$ and $D$
  violate the partial order.
	Moreover, $D\cup \{b,c\}$ induces a copy of $\vec{T}_4$ in $T$.
\end{claim}

\begin{proof}
  Recall that $|N^{-,-}| \in \{2,3\}$.
  If $|N^{-,-}| = 2$, then let $D := N^{-,-}$. Since $|D| < |N^{-,+}| = |N^{+,-}| = 3$,
  Claim~\ref{deg1clm} implies that there exists $b \in N^{-,+}$ and $c \in N^{+,-}$ such that,
  for every $d \in D$, we have $b \to d$ and $c \to d$.

  Suppose $|N^{-,-}| = 3$ and let $b \in N^{-,+}$. By Claim~\ref{deg1clm}, there exists $D \subseteq N^{-,-}$ such
  that $|D| = 2$ and $b \to d$ for every $d \in D$.
  Since $|D|  < |N^{+,-}|$, Claim~\ref{deg1clm} implies that there exists $c \in N^{+,-}$ such that
  $c \to d$ for every $d \in D$.
\end{proof}

\begin{figure}[t!]
  {\centering
    \begin{subfigure}{.45\textwidth}
      \includegraphics[scale=0.75]{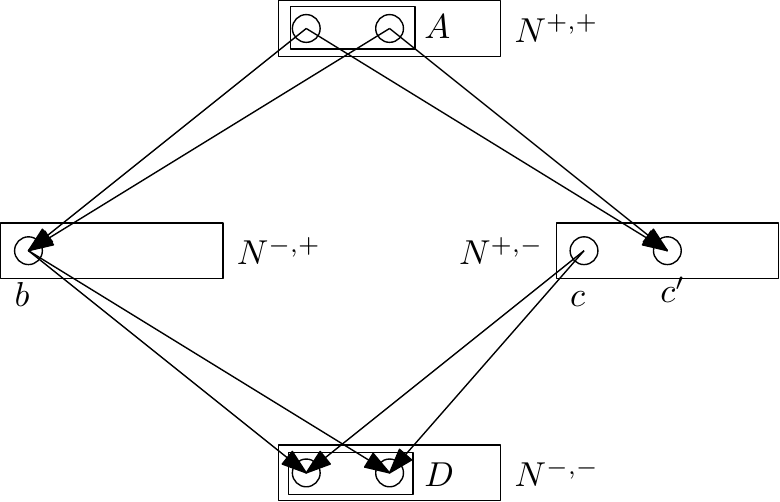}
      \caption{$c' \in N^{+,-} \setminus \{c\}$}
    \label{7configa}
    \end{subfigure}
    \begin{subfigure}{.45\textwidth}
      \includegraphics[scale=0.75]{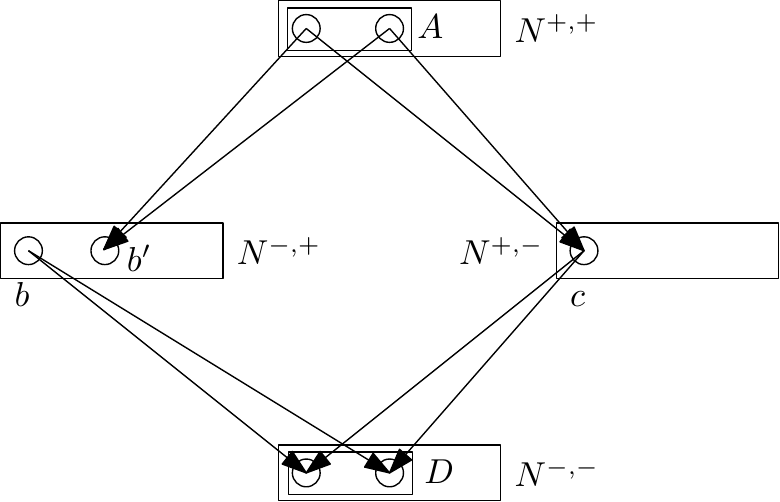}
      \caption{$b' \in N^{-,+} \setminus \{b\}$}
    \label{7configb}
    \end{subfigure}
\caption{Forbidden pairs from the proof of Claim~\ref{confclm}}\label{7config}
}
\end{figure}

\begin{claim}\label{confclm}
  Suppose $b \in N^{-,+}$,  $c \in N^{+,-}$ and $D \subseteq N^{-,-}$ such that 
  $|D| = 2$, and, for every $d \in D$, we have $b \to d$ and $c \to d$.
  \begin{enumerate}[label=(C\arabic*)]
    \item\label{C1} There exists $a' \in N^{+,+}$ such that $b \to a'$ and $c \to a'$.
    \item\label{C2} For every $v \in N^{-,+} \cup N^{+,-}$ such that $v \notin \{b, c\}$,  
      there exists $a \in N^{+,+}$ such that $v \to a$ and there 
      exists  $d \in D$ such that $d \to v$.
  \end{enumerate}
\end{claim}
\begin{proof}
  Call $(A, c')$ a \textit{forbidden pair} if 
  $A$ is a $2$-subset of $N^{+,+}$, $c' \in N^{+,-}\setminus\{c\}$, and, 
  for every $a \in A$, we have $a \to b$ and  $a \to c'$.
  Note that no forbidden pairs can exist because if $(A, c')$ is a forbidden pair, then
  $\{x, c'\} \cup A$, $\{b, c\} \cup D$, $\{y, c\} \cup D$ and $\{b, c'\} \cup A$
  each induce a $\vec{T}_4$, so the set $\{b, c, c'\} \cup A \cup D$ is a linking set (see Figure~\ref{7configa}).
  By similar logic, 
  a pair $(A, b')$ where $A$ is a $2$-subset of $N^{+,+}$, $b' \in N^{-,+} \setminus \{b\}$ 
  and $a \to b'$ and  $a \to c$ for every $a \in A$ cannot exist.
  Therefore, we also call such a pair $(A, b')$ a \textit{forbidden pair} (see Figure~\ref{7configb}).

  We will first show that
  \begin{equation}\label{vtoa}
    \text{for every } v \in N^{-,+} \cup N^{+,-} \text{ there exists } a \in N^{+,+} \text{ such that } v \to a.
  \end{equation}
  Assume the contrary, so $N^-(v) \supseteq N^{+,+}$ for some $v \in N^{-,+} \cup N^{+,-}$.
  Suppose $v \in N^{-,+}$.
  If $v \neq b$, then by Claim~\ref{deg1clm}, there exists 
  $A \subseteq N^-(v) \cap N^-(c) \cap N^{+,+}$ such that $|A| = 2$, 
  and $(A, v)$ is a forbidden pair, a contradiction.
  If $v = b$, then, by Claim~\ref{deg1clm}, for every $c' \in N^{+,-} \setminus \{ c \}$, 
  there exists $A \subseteq N^-(v) \cap N^-(c') \cap N^{+,+}$ such that $|A| = 2$, 
  and $(A, c')$ is a forbidden pair, a contradiction.
  Similar logic leads to a contradiction when $v \in N^{+,-}$, so \eqref{vtoa} holds.

\smallskip

  By \eqref{vtoa}, there exists $a', a'' \in N^{+,+}$ such
  that $b \to a'$ and $c \to a''$.
  To prove \ref{C1}, we need to show that $a' = a''$, so assume the contrary.
  Note that because $|N^{+,-}| = |N^{+,+}| = 3$, Claim~\ref{deg1clm} and \eqref{vtoa}
  imply that the edges between $N^{+,-}$ and $N^{+,+}$ that satisfy the partial order form a matching of size $3$.
  Therefore, because $c \to a''$ and $a'' \neq a'$, there exists $c' \in N^{+,-} \setminus \{c\}$ such that
  $c' \to a'$. Then $(N^{+,+}\setminus\{a'\}, c')$ is a forbidden pair, a contradiction. 

  Now assume that \ref{C2} does not hold.
  With \eqref{vtoa}, this implies that 
  there exists $v \in N^{-,+} \cup N^{+,-}$ such that $v \notin \{b, c\}$,
  and, for every $d \in D$, we have $v \to d$.
  If $v \in N^{-,+}$, then, since $b \to a'$, Claim~\ref{deg1clm} implies that $a' \to v$. 
  This, with Claim~\ref{deg1clm}, violates \ref{C1} with 
  $v$, $c$ and $D$ playing the roles of $b$, $c$, and $D$, respectively.
  Similarly, if $v \in N^{+,-}$, we violate \ref{C1}
  with $b$, $v$ and $D$ playing the roles of $b$, $c$, and $D$, respectively.
\end{proof}

We now select vertices in the following order (see Figure~\ref{8config}).
\begin{itemize}
  \item By Claim~\ref{a1b1clm}, we can select $b_1 \in N^{-,+}$, $c_1 \in N^{+,-}$ and $D \subseteq N^{-,-}$ so that 
    $|D| = 2$ and, for every $d \in D$,  we have $c_1 \to d$ and $b_1 \to d$.
  \item By Claim~\ref{confclm}\ref{C1} we can select $a_1 \in N^{+,+}$ so that $c_1 \to a_1$ and $b_1 \to a_1$.
  \item By Claim~\ref{cycclm}, we can label $\{a_2, a_3\} =  N^{+,+} \setminus \{a_1\}$ so that $a_2 \to a_1$.
  \item By Claims~\ref{deg1clm} and \ref{confclm}\ref{C2}, we can label $\{c_2, c_3\} = N^{+,-} \setminus \{c_1\}$ 
    so that $c_2 \to a_2$ and $c_3  \to a_3$.
  \item By Claim~\ref{confclm}\ref{C2}, we can select $d_3 \in D$ such that $d_3 \to c_3$.
    By Claim~\ref{deg1clm}, this implies that $c_2 \to d_3$.  
    Furthermore, by Claim~\ref{claim:tri} applied to $G[\{a_3, c_3, d_3\}]$, we 
    have $a_3 \to d_3$.
\end{itemize}

\begin{figure}[ht]
\begin{center}
  \includegraphics[scale=.75]{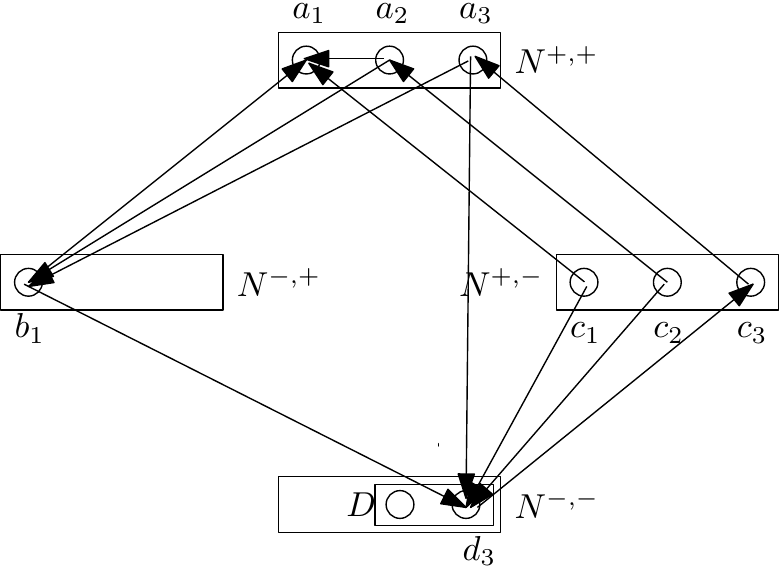}
\end{center}
\caption{The selected vertices at the end of the proof of Lemma~\ref{lemma:main-absorb}.  Note, for $i, j \in [3]$ 
and $i \neq j$ we have $a_i \to c_j$.}\label{8config}
\end{figure}

\bigskip

First note that $N^+(a_2) \supseteq \{a_1, b_1, c_1\}$,
so both $\{a_1, a_2, b_1\}$ and $\{a_1, a_2, c_1\}$ induce transitive triangles.
Since $N^+(y) \supseteq \{a_1, a_2, b_1\}$ and   
$N^+(x) \supseteq \{a_1, a_2, c_1\}$, both $\{y, a_1, a_2, b_1\}$ and   
$\{x, a_1, a_2, c_1\}$  induce copies of $\vec{T}_4$.
Furthermore, 
$N^+(a_3) \supseteq \{b_1, c_1, c_2, d_3\}$ and $N^-(d_3) \supseteq \{a_3, b_1, c_1, c_2\}$, 
so both
$\{a_3, c_1, c_2, d_3\}$, and $\{a_3, b_1, c_2, d_3\}$ induce copies of $\vec{T}_4$.
Therefore, $\{a_1, a_2, a_3, b_1, c_1, c_2, d_3\}$ is a linking set.
This contradiction completes the proof of the lemma.
\end{proof}

\begin{corollary} \label{cor:A(4,2)}
$A(4,2) \le \frac{11}{12}$.
\end{corollary}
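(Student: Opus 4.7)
The plan is to deduce the corollary from Lemma~\ref{lemma:main-absorb} by the supersaturation argument sketched in the paragraph preceding that lemma. Fix $\varepsilon > 0$, let $n$ be sufficiently large, and suppose $G$ is an oriented graph on $n$ vertices with $\delta(G) \ge \left(\tfrac{11}{12} + \varepsilon\right)n$. Fix an arbitrary pair of distinct vertices $x, y \in V(G)$; to verify the definition of $A(4,2)$, it suffices to exhibit $\eta n^{7}$ seven-element subsets $L$ of $V(G) \setminus \{x,y\}$ such that $G[L \cup \{x\}]$ and $G[L \cup \{y\}]$ each contain a perfect $\vec{T}_4$-tiling (note that with $k=4$ and $i=2$ we have $ik-1 = 7$).

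Set $U := N(x) \cap N(y)$. By inclusion--exclusion, $|U| \ge 2\delta(G) - n \ge \left(\tfrac{5}{6} + 2\varepsilon\right)n \ge \tfrac{10}{12} n$, so Fact~\ref{fact:deg}, applied with $r = 12$, $s = 10$ and $c = \varepsilon$, yields
\begin{equation*}
\delta(G[U]) \ge \left(\tfrac{9}{10} + \tfrac{6\varepsilon}{5}\right)|U|.
\end{equation*}
This is strictly above the Tur\'an threshold $\left(1 - \tfrac{1}{10}\right)|U|$ for forcing a $K_{11}$ in the simple graph underlying $G[U]$, so a standard supersaturation argument produces $\Omega(n^{11})$ eleven-element subsets $S \subseteq U$ on which this underlying simple graph is complete; equivalently, $G[S]$ is a tournament on $11$ vertices. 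Since $S \subseteq N(x) \cap N(y)$, both $\{x\} \cup S$ and $\{y\} \cup S$ induce tournaments on $12$ vertices in $G$, and Lemma~\ref{lemma:main-absorb} (applied with $T := S$) delivers a $7$-subset $Z_S \subseteq S$ such that $G[\{x\} \cup Z_S]$ and $G[\{y\} \cup Z_S]$ each contain a perfect $\vec{T}_4$-tiling.

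It remains to bound the multiplicity with which each seven-set $L$ arises as some $Z_S$: once $L$ is fixed, the remaining four vertices of $S$ must be chosen from $V(G) \setminus L$, so at most $\binom{n}{4} = O(n^4)$ sets $S$ yield $Z_S = L$. Therefore the number of distinct sets $L$ obtained is at least $\Omega(n^{11})/O(n^4) = \Omega(n^7) \ge \eta n^7$ for some $\eta = \eta(\varepsilon) > 0$, as required. Since $\varepsilon > 0$ was arbitrary, this gives $A(4,2) \le \tfrac{11}{12}$. The only genuine obstacle is Lemma~\ref{lemma:main-absorb} itself, which has already been proved; the rest is bookkeeping, and the numerical thresholds $\tfrac{11}{12}$, $\tfrac{10}{12}$ and $\tfrac{9}{10}$ match up exactly under Fact~\ref{fact:deg}, so no parameter optimization is needed.
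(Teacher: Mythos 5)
Your argument is correct and follows essentially the same route as the paper's proof: restrict to $U = N(x)\cap N(y)$, apply Fact~\ref{fact:deg} with $r=12$, $s=10$, $c=\varepsilon$, use supersaturation to find $\Omega(n^{11})$ tournaments on $11$ vertices in $G[U]$, invoke Lemma~\ref{lemma:main-absorb}, and divide by the $O(n^4)$ multiplicity. The constants and the bookkeeping all match the paper's version.
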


\begin{proof}
Let $0< 1/n_0 \ll \eta \ll \varepsilon\ll 1$.
Let $G$ be an oriented graph on $n \ge n_0$ vertices with  $\delta(G) \ge \left(\frac{11}{12} + \varepsilon\right)n$.
Consider any distinct vertices $x$ and $y$ in~$G$.
Let $U := N(x) \cap N(y)$ and note that $|U| \ge 2 \delta(G) - n \ge (10/12 + 2 \varepsilon)n$.
		By Fact~\ref{fact:deg} (with $r=12$, $s = 10$ and $c =  \varepsilon$), 
    we have that 
    \begin{equation*}
      \delta(G[U]) \ge \left(\frac{9}{10} + \frac{12}{10} \varepsilon \right)|U|,
    \end{equation*}
    so, by supersaturation, there exists at least $\eta n^{11}$ tournaments on 
    $11$ vertices in $G[U]$.
    By Lemma~\ref{lemma:main-absorb}, in every such tournament, there
    exists a set $Z$ on $7$ vertices such that $G[\{x\} \cup Z]$ and 
    $G[\{y\} \cup Z]$ both contain a perfect $\vec{T}_4$-tiling.
    Since each such set $Z$ is contained in at most $n^4$ tournaments on $11$ vertices in $G$,
    there are at least $\eta n^7$ such sets $Z$. As $\varepsilon >0$ can be chosen arbitrarily small,
		 $A(4,2) \le \frac{11}{12}$, as required. 
\end{proof}

\section{Concluding remarks and open questions}
In this paper we have asymptotically determined the minimum degree required to force a perfect $\vec{T}_4$-tiling in an oriented graph (Theorem~\ref{thm:T4tiling}). We also obtained  bounds for the general perfect $\vec{T}_k$-tiling problem (Theorem~\ref{inone})
and the perfect fractional $\vec{T}_k$-tiling problem (Theorem~\ref{mainobs}).
In light of Theorem~\ref{mainobs} it would be  interesting to determine whether one can ensure a perfect $\vec{T}_k$-tiling in an oriented graph $G$ of minimum degree $(1 - 1/\trs{k}+o(1))|G|$.
\begin{question}\label{ques} Let $n,k \in \mathbb N$ where $k$ divides $n$ and $k\geq 4$.
Does every $n$-vertex graph with 
$$\delta (G) > (1 - 1/\trs{k}+o(1))n$$
contain a perfect $\vec{T}_k$-tiling?
\end{question}
Note that the $k=4$ case of Question~\ref{ques} is answered in the affirmative by Theorem~\ref{thm:T4tiling}.
If one can show that, for all $k \geq 5$,
$$A(k)\leq 1 - 1/\trs{k}$$
then together with Theorem~\ref{mainobs} and Lemma~\ref{frac+absorb} this would positively answer Question~\ref{ques}. 

For large $k$, Theorem~\ref{mainobs} gives rather close upper and lower bounds on the threshold for perfect fractional $\vec{T}_k$-tiling in oriented graphs (recall that $\trs{k}$ grows exponentially with $k$).
We suspect that  it is possible one can improve on the lower bound in Theorem~\ref{mainobs} (perhaps the upper bound is in fact tight).

It would also be interesting to close the bounds on $\trs{k}$ in Theorem~\ref{boundy}; indeed as discussed in Section~\ref{remarks} this could even lead to improvements on the lower bounds on  $\rr{k}$ and the classical Ramsey numbers $R(k,k)$.
It is also natural to seek structural information on $\vec{T}_k$-free tournaments on $\rr{k}-1$ vertices. When $k=3,4,5,6$, the unique $\vec{T}_k$-free tournament on $\rr{k}-1$ vertices is regular (see~\cite{sanchez1998tournaments}).
This leads to the following question.
\begin{question}\label{ques2} Let $k \geq 3$. Is every $\vec{T}_k$-free tournament on $\rr{k}-1$ vertices a regular tournament?
\end{question}
As noted by a referee, it is not even clear that $\rr{k}$ is even for all $k\geq 3$ (a necessary condition for Question~\ref{ques2} to have an affirmative answer). So this in itself is an interesting question.

Answering Question~\ref{ques2} may also provide insight on the problem (raised in~\cite{treglown2012note}) of determining the minimum \emph{semidegree} that forces an oriented graph to contain a perfect $\vec{T}_k$-tiling. 
Indeed,
given a fixed $k \geq 3$, let $\mathrm{reg}(k)$ denote the size of the largest $\vec{T}_k$-free regular tournament.
Construct an oriented graph $G_{n,k}$ as follows. The vertex set of $G_{n,k}$ consists of a set $A$ of $n/k-1$ vertices and a set $B$ of $(1-1/k)n+1$ vertices; $G_{n,k}[A]$ induces a tournament so that for every vertex in this tournament, its in- and outdegree differs by at most one.
Further $G_{n,k}[B]$ is a blow-up of a
$\vec{T}_k$-free regular tournament $T$ on $\mathrm{reg}(k)$ vertices where the independent sets in $B$ corresponding to  vertices in $T$ are as equally sized as possible (more generally, we could let $G_{n,k}[B]$ be a $\vec{T}_k$-free oriented graph on $|B|$ vertices having the largest possible minimum semidegree; however, we suspect that such an oriented graph will come from the blow-up of a $\vec{T}_k$-free regular tournament $T$ on $\mathrm{reg}(k)$ vertices).
Finally, add all possible edges between $A$ and $B$ in $G_{n,k}$, oriented to ensure that for every vertex $v$ in  $G_{n,k}$, $d^+_{G_{n,k}}(v)$ and $d^-_{G_{n,k}}(v)$ are as close as possible.
Notice that every copy of $\vec{T}_k$ in $G_{n,k}$ must use at least one vertex from $A$; thus as $|A|=n/k-1$, $G_{n,k}$ does not contain a perfect $\vec{T}_k$-tiling.
Further, certainly $\delta ^0 (G_{n,k}) \geq \left (\frac{1}{2}- \frac{(k-1)}{2k \cdot \mathrm{reg} (k)} -o(1) \right )n.$

Note that $G_{n,k}$ is a generalization of the example given in~\cite[Proposition~6]{treglown2012note} (which deals with the case when $k=3$). Further,
in~\cite{balogh2017transitive} it was proven that $G_{n,3}$ is an extremal example for the minimum semidgree problem for perfect $\vec{T}_3$-tilings. That is,
all sufficiently large oriented graphs on $n$ vertices whose minimum semidegree is above that of $G_{n,k}$ contains a perfect $\vec{T}_3$-tiling.
Thus, it is natural to ask the following question.
\begin{question}\label{ques3} Let $k,n \geq 3$ so that $k$ divides $n$. Does every oriented graph $G$ on $n$ vertices  with 
$$\delta ^0 (G) > \left (\frac{1}{2}- \frac{(k-1)}{2k \cdot \mathrm{reg} (k)} +o(1) \right )n$$
contain a perfect $\vec{T}_k$-tiling?
\end{question}
\section{Acknowledgments}

This project began during the ``Recent Advances in Extremal Combinatorics Workshop'' at the Tsinghua Sanya International Mathematics Forum, May 22-26, 2017.  We thank the organizers of this conference for the stimulating work environment.  

We also thank the referees for their helpful and careful reviews.

\newpage

\section{Appendix: Tournaments on $12$ vertices that do not have a perfect $\vec{T}_4$-tiling}\label{app:12_no_tt4}

In Figure~\ref{fig:12_no_tt4}, we list $43$ tournaments on $12$ vertices that do not have a perfect $\vec{T}_4$-tiling.   
This is an exhaustive list (up to isomorphism) of such tournaments.
The $66 = \binom{12}{2}$ numbers in each line represent the entries in the upper triangle of the $n \times n$ matrix 
$\left(a_{i,j}\right)$ where 
$a_{i,j}=1$ if the edge incident to $v_i$ and $v_j$ is directed from $v_i$ to $v_j$ 
and $a_{i,j}=0$ otherwise. These entries are listed in the following order 
\begin{equation*}
  a_{1,2}a_{1,3} \dotsm a_{1,12}a_{2,3}a_{2,4} \dotsm a_{2,12} \dotsc a_{10,11}a_{10,12}a_{11,12}.
\end{equation*}
(This is the default output format for the program \texttt{gentourng} which 
is a program that is distributed with \texttt{nauty} and \texttt{Traces} \cite{nauty} that can be used to generate 
all small tournaments.)

\begin{figure}[ht]
  {\tiny \centering 
    \texttt{
      \\
      110011001001111001001111010110101101111101010111101110101100111111\\
      111011111111010101100111010100110010101111001110110110101101111111\\
      111011000001010110110111011010110101011101100111011111011111110111\\
      110011010001110100100111011001101110101110101110011111111111110101\\
      111011100001010110110111001010110101011101100111011111011111110111\\
      110011101111110010100111001010101110011110110101100110101101111111\\
      111100010001101010100110110011111011011111010111001101111110111101\\
      101011000101111000100110101000110110111110101111110111011111110111\\
      110011101001110110010111001100101101011101010111001111111111110111\\
      101011001001111001001110111111110101101101010111101110101100111111\\
      111100101001101011001110101111111100101101100110101110101111110111\\
      101010011001111010000110100101111010111110110111101111111010110111\\
      111111111111010101100111010100110010101111001110110110101101111111\\
      110100110001101101001111110010110011001110101111011101111110111101\\
      110101110001101010010111100100110110011101011110110111011111111101\\
      110010101001110011001111110010101101011101111101100110101111110111\\
      101010110011110101111110010100111100101101100110101110101111110111\\
      101010111111110101000110010101111100101101101110100110111110110111\\
      101010110011110101000110010100111100111101101110111110101110111101\\
      101010110011110101001110010101111100111101101110101110111111111111\\
      101010110001110101001110010101111100111101101110101110111111111111\\
      101010110001110101000110010101111100111101101110101110111111111111\\
      101010110001110101000110010100111100111101101110101110111111111111\\
      101010110001110101000110010101111100101101101110101110111111111111\\
      101010110001110101001110010100111100101101101110101110111111111111\\
      101010110001110101000110010100111100101101101110101110111111111111\\
      101010110001110101000110010100111100111101100110101110111111111111\\
      101010110001110101000110010101111100101101100110101110111111111111\\
      101010110001110101001110010100111100101101100110101110111111111111\\
      101010110011110101000110010100111100101101100110101110111111111111\\
      101010110001110101001110010100111100101101101110100110111111111111\\
      101010110001110101000110010100111100101101100110101110111111111111\\
      101010110001110101000110010100111100101101101110100110111111111111\\
      101010110001110101000110010100111100111101100110100110111111111111\\
      101010110001110101000110010101111100101101100110100110111111111111\\
      101010110001110101001110010100111100101101100110100110111111111111\\
      101010110011110101000110010100111100101101100110100110111111111111\\
      101010110011110101000110010100111100101101100110101110101111111111\\
      101010110001110101001110010100111100101101101110100110101111111111\\
      101010110011110101000110010100111100101101101110100110101111111111\\
      101010110011110101000110010101111100101101100110100110101111111111\\
      101011100101111000100110101000110110111110101111110111011111110111\\
      101010110011110101000111010100111100111101101110111110101110111101\\
    }
}
\caption{$43$ tournaments on $12$ vertices that do not have a perfect $\vec{T}_4$-tiling.}
  \label{fig:12_no_tt4}
\end{figure}

\end{document}